\newtheoremstyle{plain}%
    {8pt plus2pt minus4pt}%
    {8pt plus2pt minus4pt}%
    {\itshape}%
    {}%
    {\bfseries\scshape}%
    {}%
    {6pt}% Space after theorem head
    {}%
\newtheoremstyle{remark}%
    {8pt plus2pt minus4pt}%
    {8pt plus2pt minus4pt}%
    {\upshape}% Body font
    {}%
    {\bfseries\scshape}%
    {}%
    {6pt}% Space after theorem head
    {}%
\theoremstyle{plain}
\newtheorem{theorem}{Theorem}%[section]
\newtheorem{conjecture}[theorem]{Conjecture}
\newtheorem{lemma}[theorem]{Lemma}
\newtheorem{definition}[theorem]{Definition}
\newtheorem{claim}[theorem]{Claim}
\theoremstyle{remark}
\providecommand{\bysame}{\leavevmode\hbox to3em{\hrulefill}\thinspace}
\def\le{\leqslant}
\def\ge{\geqslant}
\def\<{\langle}
\def\>{\rangle}
\def\ex{\textup{ex}}
\def\Forb{\textup{Forb}}
\title{The number of triple systems without even cycles}
\author{
{\Large{Dhruv Mubayi}}\thanks{
\footnotesize {Department of Mathematics, Statistics, and Computer Science, University of Illinois, Chicago, IL 60607, \texttt{Email:mubayi@uic.edu}}. Research partially supported by NSF grant DMS-1300138.}
\and
{\Large{Lujia Wang}}\thanks{
\footnotesize {Department of Mathematics, Statistics, and Computer Science, University of Illinois, Chicago, IL 60607, \texttt{Email:lwang203@uic.edu}}. Research partially supported by NSF grant DMS-1300138.}
}
\date{\today}
\begin{document}
\maketitle
\begin{abstract}
For $k \ge 4$, a loose $k$-cycle $C_k$ is a hypergraph with distinct edges $e_1, e_2, \ldots, e_k$ such that consecutive edges (modulo $k$) intersect in exactly one vertex and all other pairs of edges are disjoint. Our main result is that for every even integer $k \ge 4$, there exists  $c>0$  such that the number of triple systems with vertex set $[n]$ containing no $C_{k}$ is at most $2^{cn^2}$.

An easy construction shows that the exponent is sharp in order of magnitude. This may be viewed as a hypergraph extension of the work of Morris and Saxton, who proved the analogous result for graphs which was a longstanding problem. For $r$-uniform hypergraphs with $r>3$,  we improve the trivial upper bound but fall short of obtaining the order of magnitude in the exponent, which we conjecture is $n^{r-1}$.

Our proof method is different than that used for most recent results of a similar flavor about enumerating discrete structures, since it does not use hypergraph containers. One novel ingredient is the use of some (new) quantitative estimates for an asymmetric version of the bipartite canonical Ramsey theorem.

\end{abstract}

\section{Introduction}

An important theme in combinatorics is the enumeration of discrete structures that have certain properties. Within extremal combinatorics, one of the first influential results  of this type is the Erd\H os-Kleitman-Rothschild theorem~\cite{EKR76}, which implies that the number of triangle-free graphs with vertex set $[n]$ is $2^{n^2/4+o(n^2)}$. This has resulted in a great deal of work on problems about counting the number of graphs with other forbidden subgraphs~\cite{BBS04,BBS09,BBS11,BSmm,BSst,EFR86,HPS93,LR84,PS92} as well as similar question  for other discrete structures~\cite{BM11,BM12,BG96,BGP99,KR75,OKTZ14,PS09,R73,S73}. In extremal graph theory, these results show that such problems are closely related to the corresponding extremal problems. More precisely, the \emph{Tur\'{a}n} problem asks for the maximum number of edges in a (hyper)graph that does not contain a specific subgraph. For a given $r$-uniform hypergraph (henceforth $r$-graph) $F$,  let $\ex_r(n,F)$ be  the maximum number of edges among all $r$-graphs $G$ on $n$ vertices that contain no copy of $F$ as a (not necessarily induced) subgraph. Henceforth we will call $G$ an  $F$-free $r$-graph. Write $\Forb_r(n,F)$ for the set of $F$-free $r$-graphs with vertex set $[n]$. Since each subgraph of an $F$-free $r$-graph is also $F$-free, we trivially obtain $|\Forb_r(n,F)| \ge 2^{\ex_r(n,F)}$ by taking subgraphs of an 
$F$-free $r$-graph on $[n]$ with the  maximum number of edges. On the other hand for fixed $r$ and  $F$,
$$|\Forb_r(n,F)|\le \sum_{i \le \ex_r(n,F)} \binom{\binom{n}{r}}{i} =  2^{O(\ex_r(n,F)\log n)},$$
so the issue at hand is the factor $\log n$ in the exponent.  The work of 
Erd\H os-Kleitman-Rothschild~\cite{EKR76} and Erd\H os-Frankl-R\"odl~\cite{EFR86} for graphs, and Nagle-R\" odl-Schacht~\cite{NRS06} for hypergraphs (see also~\cite{NR01} for the case $r=3$) improves the upper bound above to obtain
$$|\Forb_r(n,F)|=2^{\ex_r(n,F)+o(n^r)}.$$
Although much work has been done to improve the exponent above (see \cite{ABBM11,BBS04,BBS09,BBS11,HPS93, KPR87, PS92} for graphs and~\cite{BM11,BM12,CG16, PS09,BMS15,ST15} for hypergraphs), this is a somewhat satisfactory state of affairs when $\ex_r(n,F)=\Omega(n^r)$ or $F$ is not $r$-partite.

In the case of $r$-partite $r$-graphs, the corresponding questions appear to be more challenging since the tools used to address the case $\ex_r(n,F)= \Omega(n^r)$ like the regularity lemma are not applicable. The major open problem here when $r=2$ is to prove that 
$$|\Forb_r(n,F)|=2^{O(\ex_r(n, F))}.$$
 The two cases that have received the most attention are for $r=2$ (graphs) and $F=C_{2l}$ or $F=K_{s,t}$. Classical results of Bondy-Simonovits~\cite{BS74} and Kov\'ari-S\'os-Tur\'an~\cite{KST54} yield 
$\ex_2(n, C_{2l})=O(n^{1+1/l})$ and $\ex_2(n, K_{s,t})=O(n^{2-1/s})$ for $2\le s\le t$, respectively. Although it is widely believed that these upper bounds  give the correct order of magnitude, this is not known in all cases. Hence the enumerative results sought in these two cases were 
$$|\Forb_2(n, C_{2l})|=2^{O(n^{1+1/l})} \qquad \text{\rm and} \qquad |\Forb_2(n, K_{s,t})|=2^{O(n^{2-1/s})}.$$ 

In 1982, Kleitman and Winston~\cite{KW82} proved that $|\Forb_2(n,C_4)|= 2^{O(n^{3/2})}$ which initiated a 30-year effort on searching for generalizations of the result to complete bipartite graphs and even cycles. Kleitman and Wilson~\cite{KW96} proved similar results for $C_6$ and $C_8$ in 1996 by reducing to the $C_4$ case. Finally, Morris and Saxton~\cite{MS13} recently  proved that $|\Forb_2(n,C_{2l})|= 2^{O(n^{1+1/l})}$ and Balogh and Samotij~\cite{BSmm,BSst}  proved that $|\Forb_2(n,K_{s,t})|= 2^{O(n^{2-1/s})}$ for $2\le s\le t$. Both these results used the  hypergraph container method (developed independently by Saxton and Thomason~\cite{ST15}, and by Balogh-Morris-Samotij~\cite{BMS15}) which has proved to be a very powerful technique in extremal combinatorics.
For example, \cite{BMS15} and \cite{ST15} reproved $|\Forb_r(n,F)|= 2^{\ex_r(n,F)+o(n^r)}$ using containers.

There are very few results in this area when $r>2$ and $\ex_r(n, F)=o(n^r)$. The only cases solved so far are when $F$ consists of just two edges that  intersect in at least $t$ vertices~\cite{BDDLS}, or when $F$ consists of three edges such that the union of the first two is equal to the third~\cite{BW16} (see also~\cite{BBM, BBR, DT, DK} for some related results). These are natural points to begin these investigations since the corresponding extremal problems have been studied deeply. 

Recently, Kostochka, the first author and Verstra\"ete~\cite{KMV15,KMV15II,KMV15III}, and independently, F\"uredi and Jiang~\cite{FJ14} (see also \cite{FJS14}) determined the  Tur\'{a}n number for several other families of $r$-graphs including  paths, cycles, trees, and expansions of graphs. These hypergraph extremal problems have proved to be quite difficult, and include some longstanding conjectures. 
 Guided and motivated  by these recent developments on the extremal number of hypergraphs, we consider the corresponding enumeration problems focusing on the case of cycles.
 
\begin{definition} For each integer $k\ge 3$, a $k$-cycle $C_k$  is a hypergraph with distinct edges $e_1,  \ldots, e_k$ and distinct vertices $v_1, \ldots, v_k$ such that 
$e_i\cap e_{i+1} =\{v_i\}$ for all $1\le i \le k-1 $, $e_1 \cap e_k=\{v_k\}$ and $e_i \cap e_j=\emptyset$ for all other pairs $i,j$. 
\end{definition}

Sometimes we refer to $C_k$ as a \emph{loose} or \emph{linear} cycle. To simplify notation, we will omit the parameter $r$ when the cycle $C_k$ is a subgraph of an $r$-graph.  
 
Since $\ex_r(n, C_{k}) =O(n^{r-1})$, we obtain the upper bound 
$$|\Forb_r(n, C_{k})| = 2^{O(n^{r-1}\log n)}$$
when $r$ and $k$ are fixed and $n \rightarrow \infty$.
Our main result is the following theorem, which improves this upper bound and generalizes the Morris-Saxton theorem~\cite{MS13} to 3-graphs. 
\medskip

\begin{theorem}\label{Main} {\bf (Main Result)}
For integers $ r, k\ge 3$, there exists $c=c(r,k)$, such that
\begin{align*}
|\Forb_r(n, C_{k})|< 
\begin{cases}
2^{c\,n^{2}} &\text{ if } r=3 \text{ and $k$ is even,}\\
2^{c \, n^{r-1}(\log n)^{(r-3)/(r-2)}}&\text{ if }r>3.
\end{cases}
\end{align*}
\end{theorem}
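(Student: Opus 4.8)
The plan is to bound $|\Forb_r(n,C_k)|$ by a supersaturation-plus-partition argument that avoids containers. First I would establish a supersaturation lemma: if an $r$-graph $G$ on $[n]$ has at least $C n^{r-1}$ edges (for a suitable constant $C=C(r,k)$), then $G$ contains not just one copy of $C_k$ but $\Omega(n^{(r-1)k})$ copies, or at least enough copies that every ``large'' subgraph still has one. This follows from the extremal bound $\ex_r(n,C_k)=O(n^{r-1})$ together with a standard deletion/averaging argument. The point of supersaturation is that a $C_k$-free $r$-graph cannot merely have few edges in total; the copies of $C_k$ must be destroyed on essentially every dense piece.

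The main enumeration step is a divide-and-conquer recursion. Given a $C_k$-free $r$-graph $G$ on $[n]$, I would expose its edges in blocks according to the link structure: fix an ordered sequence of $(r-1)$-subsets (or vertices, depending on what the cycle structure forces) and reveal, one coordinate at a time, which extensions lie in $G$. The key is that the asymmetric bipartite canonical Ramsey estimates mentioned in the abstract let us control the ``branching'': because $G$ is $C_k$-free, at each stage the set of admissible continuations is forced into one of a bounded number of canonical patterns (constant, min, max, or identity on the relevant coordinates), and the number of continuations of each canonical type is small. Summing the resulting binomial coefficients $\prod \binom{N_i}{m_i}$ over all admissible profiles, where $\sum m_i = e(G) = O(n^{r-1})$ and the logarithm of the number of profiles contributes the extra $(\log n)^{(r-3)/(r-2)}$ factor when $r>3$, gives the stated bound. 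For $r=3$ the canonical patterns on a single coordinate are so rigid that no extra logarithmic loss occurs, yielding the clean $2^{cn^2}$ bound — here one uses that a $C_k$-free $3$-graph's links, viewed as bipartite graphs, are $C_{k}$-like-free graphs and the Kleitman–Winston / Morris–Saxton style count applies fiberwise, then multiplies over the $n$ fibers.

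Concretely, the steps in order are: (1) prove supersaturation for $C_k$ in $r$-graphs from $\ex_r(n,C_k)=O(n^{r-1})$; (2) prove the quantitative asymmetric bipartite canonical Ramsey lemma and derive from it a ``few admissible extension types'' statement for $C_k$-free links; (3) set up the edge-exposure recursion on $[n]$, reducing an $n$-vertex $C_k$-free $r$-graph to a bounded-size family of smaller instances plus a choice of canonical profile; (4) solve the recursion, tracking that the total number of revealed edges is $O(n^{r-1})$ and that the profile count is $2^{O(n^{r-1}(\log n)^{(r-3)/(r-2)})}$ (and $2^{O(n^{r-1})}$ when $r=3$); (5) assemble the bound. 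The main obstacle I anticipate is step~(2): extracting an asymmetric, quantitatively sharp version of the canonical Ramsey theorem — ordinary canonical Ramsey gives only towers or at best exponential bounds, whereas here we need the number of canonical ``colours'' on an $m$-set to be polynomial (or better) in $m$ uniformly, and we need it in the asymmetric regime where the two sides of the bipartition have very different sizes, which is exactly what forces the $(r-3)/(r-2)$ exponent rather than something weaker. A secondary difficulty is ensuring the recursion in step~(3) does not blow up the constant $c$ super-exponentially in $r$ and $k$, which requires the exposure order to be chosen so that each revealed block genuinely reduces the problem size.
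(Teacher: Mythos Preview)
Your proposal does not match the paper's approach, and more importantly it contains genuine gaps rather than constituting an alternative proof.

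The paper's actual mechanism is quite different from what you describe. It first uses a codegree lemma (Lemma~\ref{subedge}): in a $C_k$-free $r$-graph some $(r-1)$-set has codegree at most $rk$, so one can iteratively peel off such sub-edges and thereby encode every $H\in\Forb_r(n,C_k)$ injectively as an edge-coloured $(r-1)$-graph $G$ (each edge of $G$ coloured by at most $rk$ vertices). This reduces the enumeration to bounding $g_r(n,k)$, the number of edge-coloured $(r-1)$-graphs whose extension is $C_k$-free. That count is then handled by decomposing the complete $(r-1)$-graph into balanced complete $(r-1)$-partite blocks $K_{s_i:r-1}$ via the Chung--Erd\H os--Spencer/Bublitz theorem (Theorem~\ref{HypergraphDecomposition}); the exponent $(\log n)^{(r-3)/(r-2)}$ arises precisely from the bound $\sum s_i^{r-2}\le c_1 n^{r-1}/(\log n)^{1/(r-2)}$ in that decomposition. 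For $r=3$ (even $k$) only, the canonical Ramsey input (Theorem~\ref{CanonialRamsey}) is used inside a delicate induction (Lemma~\ref{ColorKst}) to bound the number of colourings of each bipartite block.

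Your outline misses all of this. Supersaturation is never used in the paper, and you do not explain how you would convert supersaturation into an enumeration bound without containers; that is exactly the step that usually requires containers, and simply asserting a ``divide-and-conquer recursion'' does not fill it. Your description of canonical patterns as ``constant, min, max, or identity'' suggests the Erd\H os--Rado canonical theorem for orderings, not the bipartite version (rainbow / $X$-canonical / monochromatic) that is actually needed here; and in the paper this tool applies only to the $r=3$ case, not to the source of the $(\log n)^{(r-3)/(r-2)}$ factor. Finally, your claim that for $r=3$ one can apply a Morris--Saxton-style count ``fiberwise'' over $n$ links and multiply is not correct as stated: the links of a $C_k$-free $3$-graph are not themselves $C_k$-free bipartite graphs in any usable sense, and multiplying $n$ independent counts of size $2^{O(n^{1+1/l})}$ would give $2^{O(n^{2+1/l})}$, overshooting the target. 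The missing idea is the codegree-peeling encoding into coloured shadows together with the complete-bipartite decomposition; without those, your recursion has no concrete invariant that forces the claimed exponents.
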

Since trivially $\ex_r(n, C_{k})=\Omega(n^{r-1})$ for all $r \ge 3$, we obtain $|\Forb_{3}(n,C_{k})|=2^{\Theta(n^2)}$ when $k$ is even. We conjecture that a similar result holds for $r>3$ and cycles of odd length.
\medskip

\begin{conjecture} \label{conj}
For fixed $r\ge 3$ and $k \ge 3$ we have $|\Forb_{r}(n,C_{k})|=2^{\Theta(n^{r-1})}$.
\end{conjecture}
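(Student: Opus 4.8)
Since $\ex_r(n,C_k)=\Theta(n^{r-1})$ for all fixed $r\ge 3$ and $k\ge 3$ (by, e.g., \cite{KMV15,FJ14}), the lower bound $|\Forb_r(n,C_k)|\ge 2^{\ex_r(n,C_k)}=2^{\Omega(n^{r-1})}$ is immediate, so the content of the conjecture is the matching upper bound $|\Forb_r(n,C_k)|=2^{O(n^{r-1})}$. Theorem~\ref{Main} already gives this for $r=3$ and even $k$; the cases that remain are $r=3$ with $k$ odd, and $r\ge 4$ with arbitrary $k$, where Theorem~\ref{Main} loses a factor $(\log n)^{(r-3)/(r-2)}$ in the exponent. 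The plan is to obtain a clean unified proof of all cases via the hypergraph container method — which, unlike the argument of this paper, takes as its main input a \emph{balanced supersaturation} theorem for loose cycles: every $r$-graph on $[n]$ with at least $Cn^{r-1}$ edges contains a family $\HH$ of copies of $C_k$ that is simultaneously large and spread out, in the sense that for each $2\le j\le k$ the number of members of $\HH$ whose edge set contains any fixed $j$ edges of $G$ is at most $\beta^{j}$ times the average, for a suitable $\beta=\beta(n,m)$. Feeding such a theorem into the container machinery of~\cite{BMS15,ST15} yields $2^{o(n^{r-1})}$ containers, each a near-extremal (hence $O(n^{r-1})$-edge) $r$-graph, with every $C_k$-free $r$-graph a subgraph of one of them; summing $2^{o(n^{r-1})}\cdot 2^{O(n^{r-1})}$ gives the conjectured bound.

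The balanced supersaturation would be proved in four steps. First, pass to the auxiliary codegree-weighted graph $H$ on $[n]$ in which the pair $\{x,y\}$ carries weight $d_G(\{x,y\})=|\{z:\{x,y,z\}\in G\}|$ (for $r>3$, weight an $(r-1)$-set analogously); a loose $C_k$ in $G$ is, up to choosing private vertices, a copy of the graph cycle $C_k$ in $H$, so the number of loose copies is comparable to a codegree-weighted count of graph $k$-cycles in $H$. Second, supersaturate in $H$: once the ambient density exceeds the relevant Turán threshold, a Bondy--Simonovits / K\H ov\'ari--S\'os--Tur\'an type argument (or Morris--Saxton-style counting) produces many copies of $C_k$, and this step is insensitive to the parity of $k$. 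Third — the nontrivial step — establish \emph{balance}: pairs of very large codegree, and more generally low-codimension substructures saturated by many copies, must be extracted and handled separately, for instance by deleting a bounded number of vertices or by a dyadic decomposition of $(r-1)$-sets according to codegree, arguing within each class that the copies are automatically spread with the required $\beta$. Fourth, check the quantitative co-degree hypotheses $\Delta_j(\HH)$ demanded by the container theorem and conclude.

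For $r=3$ and $k$ odd there is an alternative that sidesteps full balanced supersaturation: reduce to the even case already handled by Theorem~\ref{Main}. A loose $C_k$ can be assembled from two internally disjoint loose paths of lengths $a$ and $b$ with $a+b=k$ sharing only their two link endpoints, so a $C_k$-free $3$-graph has severely restricted loose-path connectivity between pairs of vertices; one then bounds $|\Forb_3(n,C_k)|$ by combining the loose-path Tur\'an bound $\ex_3(n,P_\ell)=O(n^2)$ with a counting/stability argument over which pairs are joined by long loose paths. A slicker route is a \emph{doubling} trick: a closed loose walk of length $k$ with few repeated vertices can typically be surgered into an honest loose $C_k$, or into a loose $C_{k'}$ with $k'$ even and $k'<2k$, thereby reducing the odd case to several even cases covered by Theorem~\ref{Main}; making ``typically'' precise — i.e.\ controlling the degenerate walks that cannot be so surgered — is the obstacle here.

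The main obstacle, across all approaches, is the balanced supersaturation itself (or its surrogate in the odd $r=3$ reduction): producing the copies of $C_k$ in a dense $r$-graph in a spread-out way that meets the container theorem's co-degree bounds. Removing the $(\log n)^{(r-3)/(r-2)}$ factor for $r\ge 4$ is exactly the statement that the counting in this paper is lossy by a polylogarithm at each of the roughly $r-2$ recursive steps, and that only a genuinely balanced family of copies — rather than the unbalanced supersaturation that suffices for the weaker bound — can close that gap. I expect the loose structure to make the balance condition \emph{easier} to verify for $r\ge 4$ than in the graph even-cycle case, since copies of $C_k$ overlap in fewer vertices; so I anticipate the real crux is the $r=3$, $k$ odd case, where the auxiliary graph $H$ is genuinely two-dimensional and its codegree distribution can be as pathological as in the Morris--Saxton setting.
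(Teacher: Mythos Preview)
The statement you are attempting is Conjecture~\ref{conj}, which the paper explicitly leaves \emph{open}; there is no proof in the paper to compare against. The paper's concluding remarks say precisely that the main open problem is this conjecture, and even suggest that a container-based approach ``would entail proving some supersaturation type results for this problem which may be of independent interest'' --- which is exactly the route you sketch.

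Your proposal is therefore not a proof but a research plan, and you are candid about this: you repeatedly flag the balanced supersaturation theorem as the ``nontrivial step'' and the ``main obstacle,'' and you do not actually prove it. That is the genuine gap. Steps one, two, and four of your outline are routine; step three --- establishing the $\Delta_j(\HH)$ co-degree bounds on a family of loose $C_k$ copies --- is the whole difficulty, and nothing in your sketch (dyadic decomposition by codegree, deleting high-codegree $(r-1)$-sets) goes beyond a description of what one would like to do. The Morris--Saxton balanced supersaturation for graph even cycles is already delicate; your claim that the loose structure should make $r\ge 4$ \emph{easier} is plausible heuristics but not an argument. Likewise, the ``doubling trick'' for odd $k$ when $r=3$ is stated only at the level of a hope: you note that controlling the degenerate closed walks is ``the obstacle here'' and then stop.

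In short: your proposal identifies a natural line of attack that the authors themselves point to, but it does not close the problem, and the paper does not either.
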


Almost all recent developments in this area have relied on the method of hypergraph containers that we mentioned above. What is perhaps  surprising about the current work is that the proofs do not use hypergraph containers. Instead, our methods employ old and new tools in extremal (hyper)graph theory. The old tools include the extremal numbers for cycles modulo $h$ and results about decomposing complete $r$-graphs into $r$-partite ones, and the new tools include the analysis of the shadow for extremal hypergraph problems and quantitative estimates for the bipartite canonical Ramsey problem.

\subsection{Definitions and notations}

Throughout this paper, we let $[n]$ denote the set $\{1,2,\ldots,n\}$.  Write $\binom{X}{r}= \{S\subset X:|S|=r\}$ and $\binom{X}{\le r}= \{S\subset X:|S|\le r\}$. For $X \subset[n]$, an \emph{$r$-uniform hypergraph} or \emph{$r$-graph} $H$ on vertex set $X$ is a collection of $r$-element subsets of $X$, i.e. $H\subset\binom{X}{r}$. The vertex set $X$ is denoted by $V(H)$. The $r$-sets contained in $H$ are  \emph{edges}. The \emph{size} of  $H$ is $|H|$. Given $S\subset V(H)$, the \emph{neighborhood} $N_{H}(S)$ of $S$ is the set of all $T\subset V(H)\setminus S$  such that $S\cup T \in H$. The \emph{codegree} of $S$ is $d_{H}(S)=|N_H(S)|$. When the underlying hypergraph is clear from context, we may omit the subscripts in these definitions and write $N(S)$ and $d(S)$ for simplicity. The \emph{sub-edges} of $H$ are the $(r-1)$-subsets of $[n]$ with positive codegree  in $H$. The set of all sub-edges of $H$ is called the \emph{shadow} of $H$, and is denoted  $\partial H$.

An \emph{$r$-partite $r$-graph} $H$ is an $r$-graph with vertex set $\bigsqcup_{i=1}^rV_i$ (the $V_i$s are pairwise disjoint), and every $e\in H$ satisfies $|e\cap V_i|=1$ for all $i\in [r]$. When all such edges $e$ are present,  $H$ is called a \emph{complete $r$-partite $r$-graph}. When $|V_i|=s$ for all $i\in [r]$, a complete $r$-partite $r$-graph $H$ is said to be \emph{balanced}, and denoted $K_{s:r}$.

For each integer $k\ge 1$, a \emph{(loose, or linear) path of length $k$} denoted by $P_{k}$, is a collection of $k$ edges $e_1, e_2,\ldots, e_k$ such that $|e_i\cap e_j|=1$ if $i=j+1$, and $e_i\cap e_j=\emptyset$ otherwise. 

We will often omit floors and ceilings in our calculations for ease of notation and all $\log$s will have base 2.
\section{Proof of the main result}

The proof of Theorem~\ref{Main} proceeds by counting  edge-colored $(r-1)$-graphs with certain restrictions; the details differ quite substantially for the cases $r=3$ and $r>3$. In this section we state the main technical statement (Theorem~\ref{ColorKs:r-1}) about these edge-colorings that will be needed, as well as some other tools, and then prove Theorem~\ref{Main} using these results.

\subsection{Main technical statement}

Given an $(r-1)$-graph $G$ with $V(G)\subset[n]$, a \emph{coloring function} is a function $\chi: G\rightarrow [n]$ such that  $\chi(e)=z_e\in [n]\setminus e$ for every $e \in G$. We call $z_e$ the \emph{color} of $e$. The vector of colors $N_G=(z_e)_{e\in G}$ is called an \emph{edge-coloring} of $G$. The pair $(G,N_G)$ is an \emph{edge-colored} $(r-1)$-graph.  A \emph{color class} is the set of all edges that receive the same color.

Given $G$, each edge-coloring $N_G$ defines an $r$-graph $H(N_G)=\{e\cup \{z_e\}: e\in G\}$, called the \emph{extension of $G$ by $N_G$}. When there is only one coloring that has been defined, we also use the notation $G^*=H(N_G)$ for the extension. Observe that any subgraph $G' \subset G$ also admits an extension by $N_G$, namely, $G'^*=\{e\cup \{z_e\}:e\in G'\} \subset G^*$. If $G' \subset G$ and $\chi|_{G'}$ is one-to-one, then $G'$ is called \emph{rainbow colored}. If a rainbow colored $G'$ further satisfies that $z_e\notin V(G')$ for all $e\in G'$, then $G'$ is said to be \emph{strongly rainbow colored}. Note that  a strongly rainbow colored graph $C_{k} \subset G'$ gives rise to 3-graph $C_{k}$ in $G'^* \subset G^*$.

\begin{definition}
Given $r\ge 3, k\ge 3, s\ge 1$, let $f_{r}(n,k,s)$ be the number of edge-colored balanced complete $(r-1)$-partite $(r-1)$-graphs $G=K_{s:r-1}$ with $V(G)\subset[n]$, whose extension $G^*$ is $C_{k}$-free.
\end{definition}

The function $f_r(n,k,s)$ allows us to encode $r$-graphs, and our main technical theorem gives an upper bound for this function. 

\begin{theorem}\label{ColorKs:r-1}
Given $r\ge 3, k\ge 3$ there exist $D=D_{k/2}$, $c_2=c_2(r,k)$, such that
\begin{align*}
f_{r}(n,k,s)\le
    \begin{cases}
      2^{(5/2)ks\log n+4s^2\log D} & \text{if}\ r=3,  k\text{ is even}, \\
      2^{(c_2+2r)s^{r-2}\log n+s^{r-1}(\log(c_2+r)+(r-2)\log s)} & \text{if}\ r>3.
    \end{cases}
\end{align*}
\end{theorem}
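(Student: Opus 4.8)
The plan is to bound $f_r(n,k,s)$ by controlling, for a fixed vertex set and a fixed coloring, how much "freedom" remains once we forbid $C_k$ in the extension. The basic dichotomy is between the number of ways to choose the relevant vertex sets (which costs roughly $n$ to some power) and the number of admissible colorings of the complete $(r-1)$-partite $(r-1)$-graph $K_{s:r-1}$ on those vertices (which is where the $C_k$-freeness constraint bites, and which will be bounded by $2^{O(s^{r-1}\log(\cdot))}$). So I would first write $f_r(n,k,s) \le \binom{n}{(r-1)s}\,(r-1)s! \cdot \max_{G}\#\{\text{colorings } N_G : G^* \text{ is } C_k\text{-free}\}$, absorbing the choice of the $(r-1)$ parts and their labels into the first factor; this already accounts for the $n^{O(s^{r-2})}$-type terms after one notes $\log\binom{n}{(r-1)s}=O(s\log n)$ when $r=3$ and $O(s^{r-2}\log n)$ after a more careful accounting for $r>3$ — actually the exponent of $n$ should just be $s^{r-2}$ up to constants since that is the number of $(r-1)$-subsets meeting each part once, which is the real source of the $n$-cost.

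\textbf{The $r=3$ case.} Here $G=K_{s:2}=K_{s,s}$ is a complete bipartite graph with parts of size $s$, a coloring assigns to each of the $s^2$ edges a color in $[n]$, and $G^*$ is a $3$-graph on the $2s$ vertices of $G$ plus the colors. The key point is that a strongly rainbow-colored $C_{k/2}$ inside $G$ (a loose cycle of length $k/2$ in the bipartite graph $G$, using distinct colors none of which lie in $V(G)$) produces a loose $3$-cycle... wait, more carefully: the remark before the definition says a strongly rainbow colored $C_k$ in $G$ gives a $C_k$ in $G^*$, but in the bipartite setting one should chase exactly which configuration in $G$ forces a loose $C_k$ in $G^*$ — I expect it is a suitably colored path/cycle of length $k$ in the graph-theoretic sense, or perhaps length related to $k$. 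Assuming the forbidden configuration in $(G,N_G)$ is identified, the strategy is: if $G^*$ is $C_k$-free then $(G,N_G)$ contains no such strongly-rainbow configuration; now invoke the quantitative asymmetric bipartite canonical Ramsey estimate (the "new tool" advertised in the introduction) in the form: any edge-coloring of $K_{s,s}$ with no strongly rainbow copy of the relevant structure must have very restricted color structure — concretely, the colors must essentially come from a bounded set or form few color classes, so that the number of such colorings is at most $2^{O(s^2 \log D)}$ where $D=D_{k/2}$ is the Ramsey-type constant. Combining with the vertex-choice factor $2^{O(ks\log n)}$ (the $(5/2)ks\log n$ in the statement suggests the vertex count and color-count of the rainbow part together contribute $\le (5/2)ks$ copies of $\log n$) yields the claimed bound.

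\textbf{The $r>3$ case.} Here one cannot appeal to a canonical Ramsey theorem for $2$-colorings in the same way; instead the bound $2^{(c_2+2r)s^{r-2}\log n + s^{r-1}(\log(c_2+r)+(r-2)\log s)}$ has a different shape: the $n$-cost is $s^{r-2}\log n$ (the number of edges of $K_{s:r-1}$ is $s^{r-1}$, but the number of distinct $(r-1)$-subsets... no, it has exactly $s^{r-1}$ edges — the $s^{r-2}$ must come from a cleverer encoding, perhaps fixing one part and counting per-"column") and the coloring-cost is $2^{O(s^{r-1}\log s)}$, i.e. each of the $s^{r-1}$ edges gets a color from a palette of size only $\mathrm{poly}(s)$ rather than $n$. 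The plan here is: use an extremal bound (the introduction mentions extremal numbers for cycles mod $h$, and $\ex_r(n,C_k)=O(n^{r-1})$) to argue that within a $C_k$-free extension, the colors assigned cannot be "too spread out" — e.g. a color appearing on a large sub-structure would, together with the $(r-1)$-partite host, build a loose $C_k$ — forcing either few colors total or each color used rarely; then bound the number of colorings accordingly, and bound the number of choices of the base graph $G$ by $n^{O(s^{r-2})}$.

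\textbf{Main obstacle.} The crux — and the genuinely new content — is the $r=3$ case: proving the quantitative asymmetric bipartite canonical Ramsey statement with an explicit constant $D=D_{k/2}$, and correctly translating "$G^*$ is $C_k$-free" into "$(G,N_G)$ has no strongly rainbow copy of the right graph-theoretic configuration of size controlled by $k/2$." Getting the configuration and the bookkeeping of which vertices/colors are "new" exactly right (so that a monochromatic-or-rainbow structure in $G$ lifts to a genuine loose cycle, with all the disjointness conditions in the definition of $C_k$ satisfied) is where I expect the real work to lie; the counting steps afterward are routine, and the $r>3$ case, while requiring its own extremal input, should be more mechanical once the right encoding of $K_{s:r-1}$ by $s^{r-2}$ many $n$-choices is set up.
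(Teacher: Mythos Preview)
Your plan is aligned with the paper's approach in broad outline, and you correctly identify the canonical Ramsey input as the heart of the $r=3$ case. Two points, however, are genuinely off and would lead you astray if you pursued them as written.

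\textbf{The $r>3$ case.} You suspect the $s^{r-2}\log n$ term comes from ``a cleverer encoding'' of the vertex set. It does not: the vertex set costs only $n^{(r-1)s}$, which is $O(s\log n)$ in the exponent. The $s^{r-2}\log n$ comes from selecting the \emph{color palette} from $[n]$. The key lemma here is purely extremal: if the coloring used at least $(c_2+r-1)s^{r-2}$ distinct colors, then after discarding those in $V(G)$ you could pick one edge per remaining color to get a strongly rainbow $(r-1)$-partite subgraph with more than $c_2 s^{r-2}$ edges, and the Tur\'an bound $\ex_{r-1}((r-1)s, C_k)=O(s^{r-2})$ forces a strongly rainbow $C_k$, hence a $C_k$ in $G^*$. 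So the palette has size $O(s^{r-2})$; choosing it costs $n^{O(s^{r-2})}$, and then each of the $s^{r-1}$ edges is colored from this palette, giving the $s^{r-1}\log s$ term. Your instinct that the palette is ``only $\mathrm{poly}(s)$'' is exactly right --- you just had the source of the $\log n$ factor inverted.

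\textbf{The $r=3$ case.} The forbidden configuration in $G$ is a strongly rainbow graph-cycle of length exactly $k$, not $k/2$; the notation $D_{k/2}$ arises simply because the paper writes $k=2l$ and parameterizes everything by $l$. More substantively, you describe the coloring bound $D^{(s+t)^2}$ as a direct consequence of the canonical Ramsey theorem, but the paper's argument is an \emph{induction on $s+t$}: one applies the asymmetric bipartite canonical Ramsey theorem to locate a rainbow, canonical, or monochromatic $K_{q,2l}$ (with $q=s^{\epsilon}$), rules out the rainbow case, and in the canonical/monochromatic cases shows that the edges incident to the $q$ vertices on the structured side are so constrained that one can strip them off and recurse on $K_{s-q,t}$ or $K_{s,t-q}$. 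The separate bound $|Z|<k(s+t)$ on the total number of colors (your ``$(5/2)ks\log n$'' intuition is correct for this) uses the extremal result on cycles of length $2 \pmod{k-2}$, not the Ramsey theorem. So the two advertised ``new tools'' enter at different points, and the coloring count is considerably more delicate than a one-shot Ramsey application.
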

For $r$ and $k$ fixed, the bounds above can be written as
\begin{align*}
f_{r}(n,k,s)=
    \begin{cases}
      2^{O(s\log n + s^2)} & \text{if}\ r=3,  k\text{ is even,} \\
      2^{O(s^{r-2}\log n+s^{r-1}\log s)} & \text{if}\ r>3.
    \end{cases}
\end{align*}
Note that the trivial upper bound is $f_r(n,k,s)\le n^{(r-1)s+s^{r-1}}\sim 2^{s^{r-1}\log n}$ (first choose $(r-1)s$ vertices, then color each of its $s^{r-1}$ edges) so Theorem~\ref{ColorKs:r-1} is nontrivial only  if $s=o(n)$. The proof of Theorem~\ref{ColorKs:r-1} will be given in Sections 3--6.
 
%Let us provide some intuition for the quantities that appear in Theorem~\ref{ColorKs:r-1}. When applying Theorem~\ref{ColorKs:r-1} in our proof, $r$ and $k$ are fixed and $s$ is a  polynomial in $\log n$.  As noted above, the trivial upper bound for $f_r(n,k,s)$ is $n^a=2^{a \log n}$ where $a=|K_{s:r-1}|=s^{r-1}$. Let us suppose for a moment that $r=3$ and one could show that the set of colors of each edge lay in a  subset of $[n]$ of size $O(s)$. Then the upper bound can be improved to roughly $s^{O(a)}=2^{O(s^2\log s)}$, which is quite a bit better than the trivial bound of $2^{O(s^2 \log n)}$ but not quite the bound $2^{O(s\log n+s^2)}$ given by Theorem~\ref{ColorKs:r-1}. Suppose one could further argue that for most edges, there are at most a constant number of choices for the color of the edge (instead of $O(s)$ choices). Then we obtain the bound in Theorem~\ref{ColorKs:r-1} (when $r=3$ and $k$ is even). Our main work is in showing that the $C_{k}$-free assumption allows us to make these sort of quantitative statements precise.  

 \subsection{Decomposing $r$-graphs into balanced complete $r$-partite $r$-graphs}
 
Chung-Erd\H{o}s-Spencer~\cite{CES83} and Bublitz~\cite{B86} proved that the complete graph $K_n$ can be decomposed into balanced complete bipartite graphs such that the sum of the sizes of the vertex sets in these bipartite graphs is at most $O(n^2/\log n)$. See also~\cite{T84,MT10} for some generalizations and algorithmic consequences. We need the following generalization of this result to $r$-graphs.
\begin{theorem}\label{HypergraphDecomposition}
Let $n\ge r\ge 2$. There exists a constant $c'_1=c'_1(r)$, such that any $n$-vertex $r$-graph $H$ can be decomposed into balanced complete $r$-partite $r$-graphs $K_{s_i:r}, i=1,\ldots,m$, with $s_i\le (\log n)^{1/(r-1)}$ and $\sum_{i=1}^ms_{i}^{r-1}\le c'_1n^{r}/(\log n)^{1/(r-1)}$.
\end{theorem}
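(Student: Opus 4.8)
The plan is to build the decomposition greedily, at each step pulling out the largest balanced complete $r$-partite subgraph whose existence is forced by the current edge density, and then to control the total cost by summation by parts. The only external input is the theorem of Erd\H os on Tur\'an numbers of complete $r$-partite $r$-graphs (the $r$-graph analogue of the Kov\'ari-S\'os-Tur\'an theorem): for fixed $r$ there is a constant $C_r\ge 2$ such that $\ex_r(N,K_{t:r})\le C_r N^{\,r-1/t^{r-1}}$ for all $N$ and $t$ (any bound of the form $O_r(N^{\,r-c/t^{r-1}})$ would serve equally well).

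Fix $n$, assume $n$ is large (small $n$ being absorbed into $c'_1$), and put $m:=\lfloor(\log n/\log C_r)^{1/(r-1)}\rfloor$, so that $m\le(\log n)^{1/(r-1)}$. The point is that any $r$-graph $H$ on $n$ vertices with $|H|\ge C_r n^{r-1}$ and density $\beta:=|H|/n^r$ contains, by Erd\H os's theorem, a copy of $K_{s(\beta):r}$, where $s(\beta):=\lfloor(\log n/(\log C_r+\log(1/\beta)))^{1/(r-1)}\rfloor$ is an integer in $\{1,\dots,m\}$; here one checks that the density threshold for finding $K_{s:r}$ is precisely $\beta\ge C_r n^{-1/s^{r-1}}$ and that $s(\beta)$ is nondecreasing in $\beta$. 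I would then run the obvious procedure: while $|H|\ge C_r n^{r-1}$, set $\beta=|H|/n^r$, delete a copy of $K_{s(\beta):r}$ from $H$, and record it; once $|H|<C_r n^{r-1}$, finish by recording every remaining edge as a $K_{1:r}$. This produces a decomposition of $H$ into copies $K_{s_i:r}$ with $1\le s_i\le m\le(\log n)^{1/(r-1)}$ automatically, so the only thing left is the bound on $\sum_i s_i^{r-1}$.

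For that bound I would group the extracted pieces by size. Since the density only decreases along the run while $\beta\mapsto s(\beta)$ is nondecreasing, the pieces of any fixed size $s$ are extracted in a single consecutive block; writing $a_s:=n^{\,r-1/s^{r-1}}$, a short computation bounds the number of edges removed during the size-$s$ block by $C_r(a_{s+1}-a_s)+s^r$. A size-$s$ piece removes $s^r$ edges at cost $s^{r-1}$, so the size-$s$ pieces contribute at most $C_r(a_{s+1}-a_s)/s+s^{r-1}$ to $\sum_i s_i^{r-1}$; summing over $s\le m$ and adding the $O(n^{r-1})$ cost of the final singletons reduces everything to bounding $C_r\sum_{s=1}^m(a_{s+1}-a_s)/s$. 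Summation by parts rewrites this as $O(a_{m+1}/m)+O\big(\sum_{s\ge 2}a_s/(s(s-1))\big)$; since $a_{m+1}\le n^r$ and $m=\Theta((\log n)^{1/(r-1)})$, and since $\sum_{s\ge 2}a_s/(s(s-1))$ is comparable to $\int n^r 2^{-\log n/x^{r-1}}x^{-2}\,dx$, which equals $\Theta\big(n^r/(\log n)^{1/(r-1)}\big)$ (substitute $u=\log n/x^{r-1}$; the resulting integral converges), both terms are $O\big(n^r/(\log n)^{1/(r-1)}\big)$, and so $\sum_i s_i^{r-1}\le c'_1(r)\,n^r/(\log n)^{1/(r-1)}$.

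I expect the delicate point to be exactly this last estimate. The naive bound $\sum_s(a_{s+1}-a_s)/s\le\sum_s a_{s+1}/s$ already loses too much, giving only $O(n^r)$, so one must genuinely exploit the telescoping produced by summation by parts; conceptually this reflects the fact that, although the efficiency $s(\beta)$ of a greedy step degrades as the density $\beta$ falls, it degrades only like $(\log(1/\beta))^{1/(r-1)}$, which is integrable against $d\beta$ near $\beta=0$. The remaining points---arranging the constant so that $m$ stays $\Theta((\log n)^{1/(r-1)})$, handling the extreme size $s=m$, and disposing of small $n$---are routine bookkeeping. (For $r=2$ this also reproves the Chung-Erd\H os-Spencer/Bublitz bound; trying instead to lift that result to $r$-graphs directly seems awkward, since the piece sizes one would inherit are too large.)
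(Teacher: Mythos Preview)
Your proposal is correct and follows essentially the same approach as the paper: greedily extract balanced complete $r$-partite pieces of the largest size guaranteed by Erd\H os's $K_{s:r}$ Tur\'an bound, then account for the total cost $\sum_i s_i^{r-1}$. The bookkeeping differs cosmetically---the paper groups the iterations into ``density phases'' $|H_i|\in(n^r/(k+1),\,n^r/k]$ and sums $\sum_k \frac{(\log(k+1))^{1/(r-1)}}{k(k+1)}$ directly, whereas you group by piece size $s$ and use Abel summation to arrive at an integral of the same type---but the two calculations are equivalent and yield the same bound.
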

\begin{proof}
An old result of Erd\H os~\cite{E64} states that for any integers $r,s\ge 2$ and $n\ge rs$, we have
$$\ex_r(n,K_{s:r})<n^{r-1/s^{r-1}}.$$
Note that for $r=2$, this was proved much earlier by K\H{o}v\'{a}ri-S\'{o}s-Tur\'{a}n~\cite{KST54}.

We first assume that $n\ge 2r$. Taking the derivative, one can show that for each $r\ge 2$, $n\longmapsto n/r-(\log n)^{1/(r-1)}$ is an increasing function in $n$, hence its minimum is achieved at $n=2r$. So for all $n\ge 2r$, we have
$$\frac{n}{r}-(\log n)^{1/(r-1)}\ge \frac{2r}{r}-(\log 2r)^{1/(r-1)}=2-(\log 2r)^{1/(r-1)}\ge 0.$$
Thus, for any $s\le (\log n)^{1/(r-1)}\le n/r$, the Tur\'an number for $K_{s:r}$ is 
$$\ex_r(n,K_{s:r})<n^{r-1/s^{r-1}}.$$
Next, we give an algorithm of decomposing $H$ into $K_{s:r}$s. Let $H_1=H$. For $i\ge 1$, repeatedly find a $K_{s_i:r}\subset H_i$ with maximum $s_i$ subject to $s_i\le (\log n)^{1/(r-1)}$ and delete it from $H_i$ to form $H_{i+1}$. The loop terminates at step $i$ if $|H_i|\le n^r/(\log n)^{1/(r-1)}$. Then let the remaining graph be decomposed into single edges ($K_{1:r}$s).

By the algorithm, the vertex size of each $K_{s_i:r}$ satisfies that $s_i\le  (\log n)^{1/(r-1)}, 1\le i\le m$ automatically. So we are left to show the upper bound for $\sum_{i=1}^m s_i^{r-1}$.

We divide the iterations of the above algorithm into phases, where the $k$th phase consists of those iterations where the number of edges in the input $r$-graph of the algorithm lies in the interval $(n^r/(k+1),n^r/k]$. In other words, in phase $k$, each $K_{s_i:r}$ to be found is in an $r$-graph $H_i$ with $|H_i|>n^r/(k+1)$. Define $s(k)= \left({\log n/\log (k+1)}\right)^{1/(r-1)}$. Then it is easy to see $s(k)\le (\log n)^{1/(r-1)}\le n/r$. So by Erd\H os' result
$$\frac{n^r}{k+1}=n^{r-{1/s(k)^{r-1}}}>\ex(n,K_{s(k):r}).$$
Hence, $K_{s(k):r}\subset H_i$. So in phase $k$, the minimum $s_i$ of a $K_{s_i:r}$ we are able to find has the lower bound
\begin{align*}
s_i\ge s(k)= \left(\frac{\log n}{\log (k+1)}\right)^{1/(r-1)}.
\end{align*}
Now notice that
\begin{align*}
\sum_{i=1}^ms_i^{r-1}=\sum_{i=1}^ms_i^r\cdot\frac{1}{s_i}.
\end{align*}
Dividing up the terms in the summation according to phases, we observe that this is a sum of the number of edges deleted in the $k$th phase times a weight of $1/s_i$ for each edge. Also notice that there are at most $n^r/(\log n)^{1/(r-1)}$ single edges, we have
\begin{align*}
\sum_{i=1}^ms_i^{r-1}&\le \frac{n^r}{(\log n)^{1/(r-1)}}+\sum_{k=1}^\infty n^r\left(\frac{1}{k}-\frac{1}{k+1}\right)\left(\frac{\log (k+1)}{\log n}\right)^{1/(r-1)}\\
&= \frac{n^r}{(\log n)^{1/(r-1)}}+ \sum_{k=1}^\infty\frac{n^r(\log (k+1))^{1/(r-1)}}{k(k+1)(\log n)^{1/(r-1)}}\\
&= \frac{c'_1n^r}{(\log n)^{1/(r-1)}},
\end{align*}
where 
$$c'_1=1+\sum_{k=1}^\infty\frac{(\log(k+1))^{1/(r-1)}}{k(k+1)}.$$

Finally, for $n<2r$, we just let $H$ be decomposed into $K_{1:r}$s. Then we obtain the following bound for $\sum_{i=1}^ms_i^{r-1}$.
$$\sum_{i=1}^ms_i^{r-1}= m\le \binom{2r}{r}\le\frac{c'_1n^r}{(\log n)^{1/(r-1)}}$$
with appropriately chosen $c'_1=c'_1(r)$. This completes the proof. \end{proof}

\subsection{A corollary of Theorem~\ref{ColorKs:r-1}}

Theorem~\ref{ColorKs:r-1} is about the number of ways to edge-color complete $(r-1)$-partite $(r-1)$-graphs with parts of size $s$ and vertex set in $[n]$. In this section, we use Theorems~\ref{ColorKs:r-1} and \ref{HypergraphDecomposition} to prove a related statement where we do not require the $(r-1)$-partite condition and the restriction to $s$ vertices.

\begin{definition}
For $r \ge 3$ and $k \ge 3$, let $g_r(n,k)$ be the number of edge-colored $(r-1)$-graphs $G$ with $V(G)\subset [n]$ such that the extension $G^*$ is $C_{k}$-free.
\end{definition}
\medskip

\begin{lemma}\label{ColorHyperG}
Let $r\ge 3, k\ge 3$, and $n$ be large enough. Then there exist $c_1=c_1(r)$, $c_2=c_2(r,k)$ and $D=D_{k/2}$, such that
\begin{align*}
g_r(n,k)\le
\begin{cases}
      2^{(3kc_1+4\log D)n^2} & \text{if}\ r=3, k\text{ is even,} \\
      2^{2(c_2+2r)c_1n^{r-1}(\log n)^{(r-3)/(r-2)}} & \text{if}\ r>3.
    \end{cases}
\end{align*}
Note that if $r$ and $k$ are fixed, for both cases we have
\begin{align*}
g_r(n,k)=2^{O(n^{r-1}(\log n)^{(r-3)/(r-2)})}.
\end{align*}
\end{lemma}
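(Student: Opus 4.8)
The plan is to combine the hypergraph decomposition result (Theorem~\ref{HypergraphDecomposition}, applied with the uniformity parameter $r-1$ in place of $r$) with the bound on $f_r(n,k,s)$ from Theorem~\ref{ColorKs:r-1}. Given an edge-colored $(r-1)$-graph $(G,N_G)$ on vertex set in $[n]$ whose extension $G^*$ is $C_k$-free, first decompose the $(r-1)$-graph $G$ into balanced complete $(r-1)$-partite pieces $K_{s_i:r-1}$, $i=1,\dots,m$, with $s_i\le(\log n)^{1/(r-2)}$ and $\sum_i s_i^{r-2}\le c_1 n^{r-1}/(\log n)^{1/(r-2)}$, where $c_1=c_1(r)=c'_1(r-1)$. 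The coloring $N_G$ restricts to a coloring on each piece, and since $G^*$ is $C_k$-free, the extension of each piece $K_{s_i:r-1}$ by the restricted coloring is a subgraph of $G^*$, hence also $C_k$-free. Therefore each colored piece is counted by $f_r(n,k,s_i)$. So $g_r(n,k)$ is at most (the number of choices of the decomposition data) times $\prod_{i=1}^m f_r(n,k,s_i)$; and in fact, since $G$ together with $N_G$ is determined by the sequence of colored pieces, it suffices to bound $\prod_{i=1}^m f_r(n,k,s_i)$ by a sum over all admissible sequences of sizes. The cleanest route is: $g_r(n,k)\le \sum \prod_i f_r(n,k,s_i)$, where the sum ranges over all finite sequences $(s_i)$ with $s_i\le(\log n)^{1/(r-2)}$ and $\sum s_i^{r-2}\le c_1n^{r-1}/(\log n)^{1/(r-2)}$; one then shows this is dominated by $2^{(\text{something})\sum s_i^{r-2}}$ uniformly.

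For the case $r>3$: by Theorem~\ref{ColorKs:r-1}, $f_r(n,k,s)\le 2^{(c_2+2r)s^{r-2}\log n + s^{r-1}(\log(c_2+r)+(r-2)\log s)}$. Using $s_i\le(\log n)^{1/(r-2)}$ we get $s_i^{r-1}=s_i\cdot s_i^{r-2}\le (\log n)^{1/(r-2)}s_i^{r-2}$, and $\log s_i \le \frac{1}{r-2}\log\log n \le \log n$, so the second term in the exponent is at most $(\log(c_2+r)+ \log n)\,(\log n)^{1/(r-2)}s_i^{r-2}$, which for $n$ large is at most, say, $(c_2+2r)(\log n)^{1/(r-2)}s_i^{r-2}\log n$ — crudely, both terms in $\log f_r(n,k,s_i)$ are $O((c_2+2r) s_i^{r-2}(\log n)^{1+1/(r-2)})$... wait, I should be more careful to land exactly on the exponent $n^{r-1}(\log n)^{(r-3)/(r-2)}$ claimed. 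Summing the first-term contributions gives $(c_2+2r)\log n\sum_i s_i^{r-2}\le (c_2+2r)(\log n)\cdot c_1 n^{r-1}/(\log n)^{1/(r-2)} = c_1(c_2+2r)n^{r-1}(\log n)^{1-1/(r-2)} = c_1(c_2+2r)n^{r-1}(\log n)^{(r-3)/(r-2)}$, which is exactly the target form. The second-term contributions sum to at most $\log(c_2+r)\cdot(\log n)^{1/(r-2)}\sum_i s_i^{r-2} + (\log n)^{1/(r-2)}\sum_i s_i^{r-2}\log s_i \le (\log(c_2+r)+\log\log n)(\log n)^{1/(r-2)}c_1 n^{r-1}/(\log n)^{1/(r-2)} = O(c_1 n^{r-1}\log\log n)$, which is lower-order compared to $n^{r-1}(\log n)^{(r-3)/(r-2)}$ when $r>3$ (since $(r-3)/(r-2)>0$), so it is absorbed; choosing the constant $2c_1(c_2+2r)$ as in the statement gives the bound $g_r(n,k)\le 2^{2(c_2+2r)c_1 n^{r-1}(\log n)^{(r-3)/(r-2)}}$ for $n$ large. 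I also need to check that the number of admissible size-sequences itself only contributes a factor absorbed into the exponent: the number of pieces $m$ is at most $\sum s_i^{r-2}\le c_1 n^{r-1}$, and each $s_i$ is one of at most $(\log n)^{1/(r-2)}$ values, so the count of sequences is at most $((\log n)^{1/(r-2)})^{c_1 n^{r-1}} = 2^{O(n^{r-1}\log\log n)}$, again lower-order, so it is absorbed into the constant for large $n$.

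For the case $r=3$, $k$ even: here $r-1=2$, the decomposition (Theorem~\ref{HypergraphDecomposition} with $r=2$, i.e.\ the Chung–Erd\H os–Spencer/Bublitz result) gives $K_{s_i:2}$ pieces with $s_i\le\log n$ and $\sum_i s_i\le c_1 n^2/\log n$ (with $c_1=c_1(3)=c'_1(2)$, noting $(\log n)^{1/(r-2)}=\log n$ and the exponent $1/(r-1)=1$ here). By Theorem~\ref{ColorKs:r-1}, $f_3(n,k,s)\le 2^{(5/2)ks\log n + 4s^2\log D}$. Summing: the first term contributes $(5/2)k\log n\sum_i s_i\le (5/2)k\log n\cdot c_1 n^2/\log n = (5/2)kc_1 n^2 \le 3kc_1 n^2$; the second term contributes $4\log D\sum_i s_i^2 \le 4\log D\cdot(\log n)\sum_i s_i \le 4\log D\cdot(\log n)\cdot c_1 n^2/\log n = 4c_1 n^2\log D$. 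Again the number of admissible sequences is at most $(\log n)^{c_1 n^2/\log n\cdot(\text{something})}$... more precisely $m\le \sum s_i\le c_1 n^2/\log n$ and each $s_i\in\{1,\dots,\log n\}$, giving $2^{O((n^2/\log n)\log\log n)}=2^{o(n^2)}$, absorbed. So $g_3(n,k)\le 2^{3kc_1 n^2 + 4c_1 n^2\log D} = 2^{(3kc_1+4\log D)c_1 n^2}$; the stated bound writes the constant slightly more generously as $(3kc_1+4\log D)n^2$, so after adjusting $c_1$ (or noting $c_1\ge 1$ so the stated form would need the constant to be $(3kc_1+4\log D)$ without the extra $c_1$ on the second factor — one should just take the final constant to be $c_1\cdot(3k+4\log D)$ or verify the paper's normalization) the conclusion follows. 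The main obstacle — really the only subtlety — is verifying that the combinatorial overhead (number of decompositions, the subleading $s^{r-1}\log s$ and $\log\log n$ terms) is genuinely absorbed for large $n$, so that one can collapse everything into a single clean exponent; the heart of the argument, submultiplicativity of $f_r$ under the decomposition together with $C_k$-freeness passing to subgraphs of the extension, is immediate.
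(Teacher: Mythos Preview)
Your proposal is correct and follows essentially the same approach as the paper: decompose the $(r-1)$-graph $G$ via Theorem~\ref{HypergraphDecomposition} (with parameter $r-1$), observe that $C_k$-freeness of $G^*$ passes to each piece so each colored $K_{s_i:r-1}$ is counted by $f_r(n,k,s_i)$, then sum the exponents from Theorem~\ref{ColorKs:r-1} using the constraints on $\sum_i s_i^{r-2}$ and $s_i$ while checking that the count of admissible size-sequences is lower order. The one small difference is your handling of $\sum_i s_i^{r-1}$: you bound it by $(\log n)^{1/(r-2)}\sum_i s_i^{r-2}$, whereas the paper simply uses edge-disjointness of the pieces to get $\sum_i s_i^{r-1}\le \binom{n}{r-1}<n^{r-1}$ directly --- this is what removes the stray factor $c_1$ you noticed in the $r=3$ case and yields exactly the constant $(3kc_1+4\log D)$ in the statement.
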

\begin{proof}
Given any $(r-1)$-graph $G$, by applying Theorem~\ref{HypergraphDecomposition} with parameter $r-1$ instead of $r$, we may decompose $G$ into balanced complete $(r-1)$-partite $(r-1)$-graphs $K_{s_1:r-1},\ldots,K_{s_m:r-1}$, with $s_i\le(\log n)^{1/(r-2)}$ and $\sum_{i=1}^m s_i^{r-2}\le c_1n^{r-1}/(\log n)^{1/(r-2)}$, where $c_1=c_1(r)=c'_1(r-1)$. Then we trivially deduce the following two facts.
\begin{itemize}
\item From the second inequality, we have $m\le c_1n^{r-1}/(\log n)^{1/(r-2)}$.

\item Using the fact that these copies of $K_{s_i:r-1}$ are edge disjoint, we have $$\sum_{i=1}^ms_i^{r-1}\le \binom{n}{r-1}<n^{r-1}.$$
\end{itemize}

Therefore, to construct an edge-colored $G$, we need to first choose a sequence of positive integers $(m,s_1,\ldots,s_m)$ such that $m\le c_1n^{r-1}/(\log n)^{1/(r-2)}$, and $s_i\le (\log n)^{1/(r-2)}$ for all $i$. More formally, let 
$$S_{n,r}=\{(m,s_1,s_2,\ldots,s_m):m\le c_1n^{r-1}/(\log n)^{1/(r-2)},1\le s_i\le (\log n)^{1/(r-2)}, 1 \le i \le m\}.$$ 
Then
\begin{align*}
|S_{n,r}|&\le \frac{c_1n^{r-1}}{(\log n)^{1/(r-2)}} \left((\log n)^{1/(r-2)}\right)^{\frac{c_1n^{r-1}}{(\log n)^{1/(r-2)}}}\\
&= 2^{\log\left(\frac{c_1n^{r-1}}{(\log n)^{1/(r-2)}}\right)+\frac{c_1n^{r-1}\log(\log n)^{1/(r-2)}}{(\log n)^{1/(r-2)}}}\\
&\le2^{c_1n^{r-1}}.
\end{align*}

Then, we sequentially construct edge-colored $K_{s_i:r-1}$ for each $i\in[m]$. To make sure $G^*$ is $C_{k}$-free, $K_{s_i:r-1}^*$ has to be made $C_{k}$-free in the first place.  Applying Theorem~\ref{ColorKs:r-1}, we get the following upper bounds. For $r=3$ and even $k$, 
\begin{align*}
g_3(n,k)&\le\sum_{(m,s_1,\ldots,s_m)\in S_{n,3}}\prod_{i=1}^m f_3(n,k,s_i)\\
&\le\sum_{(m,s_1,\ldots,s_m)\in S_{n,3}}\prod_{i=1}^m 2^{(5/2)ks_i\log n+4s_i^2\log D}\\
&\le\sum_{(m,s_1,\ldots,s_m)\in S_{n,3}}2^{\sum_{i=1}^m (5/2)ks_i\log n+4s_i^2\log D}\\
&\le\sum_{(m,s_1,\ldots,s_m)\in S_{n,3}} 2^{(5/2)k\log n(c_1n^2/\log n)+4n^2\log D}\\
&\le 2^{c_1n^2}\cdot2^{(5/2)kc_1n^2+4n^2\log D}\\
&\le 2^{(3kc_1+4\log D)n^2}.
\end{align*}
For $r>3$, and $(m,s_1,\ldots,s_m)\in S_{n,r}$, the number of ways to construct these copies of $K_{s_i:r-1}$ is at most
\begin{align*}
\prod_{i=1}^m f_r(n,k,s_i)&\le\prod_{i=1}^m2^{(c_2+2r)s_i^{r-2}\log n+s_i^{r-1}(\log(c_2+r-1)+(r-2)\log s_i)} \\
&=2^{\sum_{i=1}^m(c_2+2r)s_i^{r-2}\log n+s_i^{r-1}(\log(c_2+r-1)+(r-2)\log s_i)}\\
&\le2^{(c_2+2r)\frac{c_1n^{r-1}}{(\log n)^{1/(r-2)}}\log n+n^{r-1}(\log(c_2+r-1)+(r-2)\log (\log n)^{1/(r-2)})}\\
&\le 2^{(c_2+2r)c_1n^{r-1}(\log n)^{(r-3)/(r-2)}+n^{r-1}(\log(c_2+r-1)+(r-2)\log (\log n)^{1/(r-2)})}\\
&\le 2^{(3/2)(c_2+2r)c_1n^{r-1}(\log n)^{(r-3)/(r-2)}}.
\end{align*}
Note that this is the only place in the proof where we use
$s_i\le (\log n)^{1/(r-2)}$. 

Therefore, 
\begin{align*}
g_r(n,k)&\le \sum_{(m,s_1,\ldots,s_m)\in S_{n,r}} \prod_{i=1}^m f_r(n,k,s_i)\\
&\le2^{c_1n^{r-1}}\cdot 2^{(3/2)(c_2+2r)c_1n^{r-1}(\log n)^{(r-3)/(r-2)}}\\
& \le 2^{2(c_2+2r)c_1n^{r-1}(\log n)^{(r-3)/(r-2)}},
\end{align*}
and the proof is complete.
\end{proof}

\subsection{Finding a cycle if codegrees are high}
A crucial statement that we use in our proof is that any $r$-graph such that every sub-edge has high codegree contains rich structures, including cycles. 
This was explicitly proved in~\cite{KMV15} and we reproduce the proof here for completeness.

\begin{lemma} {\bf (Lemma 3.2 in~\cite{KMV15})} \label{subedge}
For $r,k\ge 3$, if all sub-edges of an $r$-graph $H$ have codegree greater than $rk$, then $C_k\subset H$.
\end{lemma}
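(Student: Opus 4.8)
The plan is to build the cycle greedily, edge by edge, exploiting the high codegree hypothesis to extend a loose path and then close it up. Recall that in a loose $k$-cycle $C_k$ we need $k$ edges $e_1,\dots,e_k$ and $k$ vertices $v_1,\dots,v_k$ with $e_i\cap e_{i+1}=\{v_i\}$ (indices mod $k$) and all other pairs of edges disjoint; so the vertex set of $C_k$ has exactly $k(r-1)$ vertices. The strategy is: first greedily construct a loose path $P_k = f_1 f_2 \cdots f_k$ on $k(r-1)+1$ vertices, keeping careful track of which vertices have been "used," and then argue that the final link edge needed to turn the path into a cycle can be chosen from the neighborhood of an appropriate sub-edge while avoiding all previously used vertices.

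Here are the key steps in order. First I would fix a sub-edge $S_0$ (an $(r-1)$-set of positive codegree); since $d_H(S_0) > rk \ge 1$ it has a neighbor, giving the first edge $e_1 = S_0 \cup \{x\}$. Then, inductively, suppose I have built a loose path using $j < k$ edges; its last edge contains a "free end" sub-edge $S_j$ (the $(r-1)$-set consisting of the last link vertex $v_j$ together with $r-2$ fresh vertices of the last edge, or at the start any $(r-1)$-subset of $e_1$ containing a designated vertex). The number of vertices used so far is at most $j(r-1)+1 \le k(r-1)+1$, hence the number of vertices I must avoid (other than the link vertex $v_j$ itself, which must be kept) is at most $k(r-1) \le rk$. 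Wait — I should be more careful: I need the next edge $e_{j+1}$ to meet the path only in $v_j$, so I must pick a neighbor $T$ of $S_j$ whose vertices all avoid the $\le k(r-1) < rk$ previously used vertices. Since $d_H(S_j) > rk$, and each used vertex kills at most — hmm, a single neighbor $T$ could be killed by containing a used vertex; but there are at most $k(r-1)$ used vertices (excluding $v_j$), and we need $T \cap (\text{used}) = \emptyset$. Actually the cleanest bound: among the $> rk$ neighbors of $S_j$, we only need to discard those that contain at least one of the at most $(k-1)(r-1)$ "forbidden" vertices; but a neighbor is an $(r-1)$-set and we want it vertex-disjoint from the forbidden set, so a crude union bound does not immediately work unless we observe each forbidden vertex lies in at most $d_H(\text{that vertex together with }r-2\text{ vertices of }T)$... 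Let me instead use the standard trick: pick the vertices of $e_{j+1}\setminus\{v_j\}$ one at a time, extending $S_j$ by one new vertex at each sub-step using that all the relevant codegrees are $> rk > k(r-1)$, so at each of the $\le r-1$ sub-steps there is a choice avoiding the at most $k(r-1)$ used vertices. This repeated-extension argument is the real content.

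Once the path $f_1\cdots f_k$ is built with its two end sub-edges $S$ (containing $v_0$-side) and $S'$ (containing $v_{k}$-side), I would close the cycle: I need one more edge through a vertex of $S$ and a vertex of $S'$ — more precisely, following the same one-vertex-at-a-time extension, build the closing edge $e_k$ starting from a sub-edge containing both the required link vertices, avoiding all used vertices; again every codegree encountered exceeds $rk \ge k(r-1)$, so the extension succeeds. (One should set up the path so that its two ends already share or are ready to share the closing link vertices; this bookkeeping — exactly which $(r-1)$-sets play the role of sub-edges at the two ends so that the closing edge has the right intersection pattern — is where care is needed.)

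The main obstacle I expect is the bookkeeping in the closing step: ensuring that when we attach the last edge it intersects $e_1$ in exactly one vertex and $e_{k-1}$ in exactly one vertex and nothing else, which requires choosing the path's endpoints and the "active" sub-edges judiciously from the start, and verifying that at every sub-step of every extension the set whose codegree we invoke is genuinely a sub-edge of $H$ with the claimed codegree bound $> rk$. The arithmetic that $rk$ comfortably exceeds the total number of vertices to avoid, namely $k(r-1)+O(1)$, is what makes $rk$ (rather than something smaller) the natural threshold, and it is the reason the constant in the hypothesis is stated as $rk$.
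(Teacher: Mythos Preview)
Your path-extension idea via ``rotation'' (repeatedly swapping one old vertex of the current sub-edge for a fresh neighbor) is sound and does produce a loose path of any desired length; the codegree hypothesis gives you a new vertex at each of the $r-1$ sub-steps, since the total number of vertices to avoid is at most $k(r-1)<rk$. (Incidentally, a neighbor of a sub-edge is a single vertex, not an $(r-1)$-set, so your first attempt would actually have worked directly for growing the path by one edge---your detour through the one-at-a-time swap is what lets consecutive edges meet in a single vertex rather than in $r-1$.)

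The genuine gap is the closing step, and it is not just bookkeeping. To close the path $e_1,\dots,e_{k-1}$ into a cycle you need an edge $e_k$ containing both a designated vertex $v_k\in e_1$ and the terminal link vertex $v_{k-1}\in e_{k-1}$. Your proposal is to ``start from a sub-edge containing both the required link vertices,'' but no such sub-edge need exist: the hypothesis only controls codegrees of $(r-1)$-sets that are already contained in some edge of $H$, and nothing you have done guarantees that the pair $\{v_{k-1},v_k\}$ lies in any edge of $H$ at all. The rotation trick cannot steer you toward the specific vertex $v_k$; it only supplies fresh vertices.

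The paper's proof avoids exactly this difficulty by working first in the $2$-shadow $F=\partial^{r-2}H$. It shows every edge of $F$ lies in more than $rk$ triangles, and uses this to grow a triangle into a graph $k$-cycle $f_1,\dots,f_k$ in $F$ by repeatedly replacing an edge with the other two sides of a triangle. This guarantees from the outset that every consecutive link pair---including the closing one---is a pair contained in some edge of $H$. Only then does it lift each $f_i$ to a hyperedge $e_i=f_i\cup g_i$, choosing the $g_i$'s to maximize $|V(C)|$ and using the codegree bound once more to fix any unwanted collisions. The detour through the $2$-shadow is precisely the missing idea that lets one close the cycle.
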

\begin{proof}
Let $F=\partial^{r-2}H$ be  the (2-)graph that consists of pairs that are contained in some edge of $H$. Note that each edge of $H$ induces a $K_r$ in $F$, so all edges of $F$ are contained in some triangle ($C_3$). Furthermore, since all sub-edges of $H$ have codegree greater than $rk$, each edge of $F$ is in more than $rk$ triangles. We will first find a $k$-cycle in $F$ as follows. Starting with a triangle $C_3$, for $i=3,\ldots,k-1$ pick an edge $e\in C_{i}$, form $C_{i+1}$ by replacing $e$ by the other two edges of one of at least $rk-i+2$ triangles containing $e$ and excluding other vertices of $C_i$.

Next, let $C_k\subset F$ be a $k$-cycle with edges $f_1,\ldots,f_k$. Find in $H$ a subgraph $C=\{e_i: e_i=f_i\cup g_i, i\in [k]\}$ such that $V(C)=\cup_{i=1}^ke_i$ is of maximum size. Suppose $C$ is not a $k$-cycle in $H$. Then there are
distinct $i, j$ such that $g_i\cap g_j \neq\emptyset$. Pick $v\in g_i\cap g_j$ and consider the sub-edge $e_i \setminus\{v\}=f_i\cup g_i\setminus\{v\}$. The codegree $d_H(e_i \setminus\{v\})>rk$ by assumption. On the other hand, $|V(C)|<rk$ since $C$ is not a $k$-cycle, so there exists a vertex $v'\in N_H(e_i \setminus\{v\})\setminus V(C)$. Replacing $e_i$ by $e_i \setminus\{v\}\cup \{v'\}$, we obtain a $C'$ with a larger vertex set, a contradiction. So $H$ contains a $C_k$.
\end{proof}

\subsection{Proof of Theorem~\ref{Main}}
Now we have all the ingredients to complete the proof of our main result.
\medskip

{\bf Proof of Theorem~\ref{Main}.}
Starting with any $r$-graph $H$ on $[n]$ with $C_{k}\not\subset H$, we claim that there exists a sub-edge with codegree at most $rk$. Indeed, otherwise all sub-edges of $H$ will have codegree more than $rk$, and then by Lemma~\ref{subedge} we obtain a $C_{k}\subset H$. Let $e'$ be the sub-edge of $H$ with $0< d_{H}(e')\le rk$ such that it has smallest lexicographic order among all such sub-edges. Delete all edges of $H$ containing $e'$ from $H$ (i.e. delete $\{e\in H: e'\subset e\}$). Repeat this process of ``searching and deleting" in the remaining $r$-graph until there are no such sub-edges. We claim that the remaining $r$-graph must have no edges at all. Indeed, otherwise we get a nonempty subgraph all of whose sub-edges have codegree greater than $rk$, and again by Lemma~\ref{subedge}, we obtain a $C_{k}\subset H$.

Given  any $C_{k}$-free $r$-graph $H$ on $[n]$, the algorithm above sequentially decomposes $H$ into a collection of sets of at most $rk$ edges who share a sub-edge (an $(r-1)$-set) in common. We regard the collection of these $(r-1)$-sets as an $(r-1)$-graph $G$. Moreover, for each edge $e \in G$, let $N_e$ be the set of vertices $v \in V(H)$ such that $e \cup \{v\}$ is an edge of $H$ at the time $e$ was chosen. So $N_e\in \binom{[n]\setminus e}{\le rk}$, for all $e\in G$. Thus, we get a map 
$$\phi: \Forb_r(n,C_{k}) \longrightarrow \left\{(G,N_G):G\subset \binom{[n]}{r-1}, N_G=\left(N_e\in \binom{[n]\setminus e}{\le rk}:e\in G\right)\right\}.$$
We observe that $\phi$ is injective. Indeed,  
$$\phi^{-1}((G,N_G))=H(N_G)=\{e\cup\{z_e\}:e\in G, z_e\in N_e\},$$ therefore
$|\Forb_r(n,C_{k})|=|\phi(\Forb_r(n,C_{k}))|$. Let $P=\phi(\Forb_r(n,C_{k}))$ which is the set of all pairs $(G,N_G)$ such that $H(N_G)$ is $C_{k}$-free. Next we describe our strategy for upper bounding $|P|$.

For each pair $(G,N_G)\in P$ and $e \in G$, we pick exactly one $z_e^1\in N_e$. Thus we get a pair $(G_1,N_{G_1})$, where $G_1=G$, and $N_{G_1}=(z_e^1:e\in G_1)$. Then, delete $z_e^1$ from each $N_e$, let $G_2=\{e\in G_1: N_e\setminus \{z_e^1\}\neq\emptyset\}$ and pick $z_e^2 \in N_e\setminus \{z_e^1\}$ to get the pair $(G_2, N_{G_2})$. For $2\le i < rk$, we repeat this process for $G_i$ to obtain $G_{i+1}$. Since each $N_{G_i}$ contains only singletons, the pair $(G_i,N_{G_i})$ can be regarded as an edge-colored $(r-1)$-graph. Note that we may get some empty $G_i$s. This gives us a map
$$\psi:P\longrightarrow \left\{(G_1, \ldots, G_{rk}): G_i\subset\binom{[n]}{r-1} \text{ is edge-colored for all $i \in [rk]$}\right\}.$$
Moreover, it is almost trivial to observe that $\psi$ is injective, since if 
$y\ne y'$, then either the underlying $(r-1)$-graphs of $y$ and $y'$ differ, or the $(r-1)$-graphs are the same but the color sets differ. In both cases one can easily see that $\psi(y)\ne \psi(y')$. Again, we let $Q=\psi(P)$.

By Lemma~\ref{ColorHyperG}, we have
\begin{align*}
|\Forb_r(n, C_{k})|=|P|=|Q|&\le\prod_{i=1}^{rk}g_r(n,k)\\
&\le
\begin{cases}
\prod_{i=1}^{3k}2^{(3kc_1+4\log D)n^2} & \text{ if }r=3, k \text{ is even}\\
\prod_{i=1}^{rk}2^{2(c_2+2r)c_1n^{r-1}(\log n)^{(r-3)/(r-2)}} &\text{ if } r>3
\end{cases}\\
&\le
\begin{cases}
2^{\sum_{i=1}^{3k}(3kc_1+4\log D)n^2}& \hskip0.05in \text{ if }r=3, k \text{ is even}\\
2^{\sum_{i=1}^{rk}2(c_2+2r)c_1n^{r-1}(\log n)^{(r-3)/(r-2)}}& \hskip0.05in \text{ if } r>3
\end{cases}\\
&\le
\begin{cases}
2^{(9k^2c_1+12k\log D)n^2}&\hskip0.27in \text{ if }r=3, k \text{ is even}\\
2^{2rk(c_2+2r)c_1n^{r-1}(\log n)^{(r-3)/(r-2)}}&\hskip0.27in\text{ if } r>3
\end{cases}\\
&=2^{c\, n^{r-1}(\log n)^{(r-3)/(r-2)}}.
\end{align*}
where $c=9k^2c_1+12k\log D$ if $r=3$ and $k$ is even, and $c=2rk(c_2+2r)c_1$ if $r>3$. The constants $c_1=c_1(r), c_2=c_2(r,k)$ and $D=D_{k/2}$ are from Theorem~\ref{HypergraphDecomposition} and Theorem~\ref{ColorKs:r-1}, respectively. \qed

\section{Proof of Theorem~\ref{ColorKs:r-1} for $r>3$}
In the remaining part of the paper, we prove Theorem~\ref{ColorKs:r-1}. The cases $r=3$ and $r>3$ have quite different proofs. In this short section we prove the case $r>3$. 

We need one more  ingredient to prove Theorem~\ref{ColorKs:r-1}, namely a partite version of the extremal result for $C_{2l}$. This is a corollary of the main result of~\cite{KMV15} although it can also be proved directly by analyzing the shadow with a much better bound.

\begin{lemma}\label{TuranC2l}
Let $r\ge 3$, $k\ge 3$ and $G$ be an $r$-partite $r$-graph on vertex sets $\bigsqcup_{i=1}^rV_i$ with $|V_i|=s,$ for all $i$. There exists  $c'_2=c'_2(r,k)$, such that if $|G|>c'_2s^{r-1}$ then $G$ contains a cycle of length $k$.
\end{lemma}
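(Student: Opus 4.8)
The plan is to deduce the lemma from Lemma~\ref{subedge} via the same ``search and delete'' procedure used in the proof of Theorem~\ref{Main}, only this time keeping track of how many edges are removed. Suppose $G$ is $r$-partite with parts $V_1,\dots,V_r$ of size $s$ and is $C_k$-free; the goal is an upper bound on $|G|$. The key observation is that, since $G$ is $r$-partite, every sub-edge of $G$ is an $(r-1)$-set meeting exactly $r-1$ of the $r$ parts in one vertex each, so the shadow has size $|\partial G|\le \binom{r}{r-1}s^{r-1}=r s^{r-1}$.

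I would then run the deletion process: while the current hypergraph has a sub-edge $e'$ with $0<d(e')\le rk$, delete every edge containing $e'$. Each round removes at most $rk$ edges, and --- the point on which the whole bound rests --- a given $(r-1)$-set can trigger at most one round, since we only delete edges, so once the edges through $e'$ are gone its codegree is $0$ permanently; moreover every $(r-1)$-set ever eligible to be picked lies in $\partial G$. Hence there are at most $|\partial G|\le r s^{r-1}$ rounds, so at most $r^2 k\, s^{r-1}$ edges are deleted in total. If this exhausts $G$ we are done, with $|G|\le r^2 k\, s^{r-1}$. Otherwise the process halts at a nonempty subgraph of $G$, every sub-edge of which has codegree greater than $rk$; Lemma~\ref{subedge} then produces a $C_k$ inside that subgraph, hence inside $G$, contradicting $C_k$-freeness. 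Thus $|G|\le r^2k\,s^{r-1}$ whenever $G$ is $C_k$-free, and $c'_2=c'_2(r,k)=r^2k$ works. (A quicker alternative, at the cost of a worse constant, is simply to apply the non-partite bound $\ex_r(m,C_k)=O(m^{r-1})$ from~\cite{KMV15} with $m=rs$, the number of vertices of $G$; the direct argument above is the ``shadow analysis'' giving the smaller constant.)

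I do not expect a genuine obstacle here --- this is essentially the bookkeeping version of an argument the paper already uses. The two points that need a little care are: (i) bounding the number of \emph{rounds} (not just the edges removed per round) by $|\partial G|$, which relies on the remark that a processed sub-edge never recurs and that all intermediate sub-edges belong to $\partial G$; and (ii) noticing that the small-$s$ regime needs no separate treatment, since when $s\le rk$ no sub-edge can have codegree exceeding $rk$, so the process necessarily empties $G$ and the bound holds outright. In particular the hypothesis $|G|>r^2k\,s^{r-1}$ can only hold when $s>rk$, which is comfortably above the $k(r-1)$ vertices a copy of $C_k$ occupies, so the conclusion is never vacuously at odds with the geometry.
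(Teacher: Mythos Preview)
Your argument is correct. The paper's proof, however, takes the shorter route you mention parenthetically: it simply invokes the Tur\'an bound $\ex_r(n,C_k)\sim\frac{\lfloor(k-1)/2\rfloor}{(r-1)!}n^{r-1}$ from~\cite{KMV15} with $n=rs$, and records the constant $c'_2=\lfloor k/2\rfloor r^{r-1}/(r-1)!$. What you have written is precisely the ``direct shadow analysis'' that the paper alludes to just before the lemma but does not carry out. Your version is more self-contained --- it uses only Lemma~\ref{subedge}, which the paper already states and proves, rather than the full extremal result of~\cite{KMV15} --- and it makes the dependence on the $r$-partite structure explicit through the bound $|\partial G|\le r s^{r-1}$. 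The paper's version is a two-line citation; yours is an honest elementary proof. Neither constant dominates the other uniformly in $r$ (yours is $r^2k$, theirs is $\lfloor k/2\rfloor r^{r-1}/(r-1)!$), so the paper's remark about a ``much better bound'' should not be read too literally.
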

\begin{proof}
By Theorem 1.1 of~\cite{KMV15}, we have 
$$\ex_r(n,C_{k})\sim\binom{n}{r}-\binom{n-\lfloor(k-1)/2\rfloor}{r}\sim\frac{\lfloor(k-1)/2\rfloor}{(r-1)!}n^{r-1}.$$
So, we may take $c'_2$ large enough such that
$$c'_2s^{r-1}=\frac{c'_2}{r^{r-1}}n^{r-1}\ge \frac{\lfloor(k-1)/2\rfloor}{(r-1)!}n^{r-1},$$
to guarantee the existence of a copy of $C_{k}$ in $G$. We remark that $c'_2=\frac{\lfloor k/2\rfloor r^{r-1}}{(r-1)!}$ suffices.
\end{proof}

{\bf Proof of Theorem~\ref{ColorKs:r-1} for $r>3$.}
Let $G=K_{s:r-1}$ with $V(G)\subset[n]$ and $C_{k} \not \subset G^*$. For any edge-coloring $N_G=(z_e:e\in G)$ of $G$, let $Z=\{z_e:e\in G\}\subset [n]$ be the set of all its colors. We first argue that  $|Z|<(c_2+r-1)s^{r-2}$, where $c_2=c_2(r,k)=c'_2(r-1,k)=\lfloor k/2\rfloor(r-1)^{r-2}/(r-2)!$, the constant from Lemma~\ref{TuranC2l}. Indeed, if $|Z|\ge(c_2+r-1)s^{r-2}$, then $|Z\setminus V(G)|\ge (c_2+r-1)s^{r-2}-s(r-1)>(c_2+r-1-(r-1))s^{r-2}=c_2s^{r-2}$. For each color  $v \in Z\setminus V(G)$ pick an edge in $G$ with color $v$.  We get a subgraph $G' \subset G$ that is strongly rainbow with $|G'|=|Z\setminus V(G)|>c_2s^{r-2}$. By Lemma~\ref{TuranC2l}, we find an $(r-1)$-uniform  $C_{k}$ in $G$ that is strongly rainbow, which contradicts the fact that $C_{k} \not \subset G^*$.

We now count the number of edge-colored $K_{s:r-1}$ as follows: first choose $s(r-1)$ vertices from $[n]$ as the vertex set, then choose at most $(c_2+r-1)s^{r-2}$ colors, finally color each edge of the $K_{s:r-1}$. As $|K_{s:r-1}|=s^{r-1}$, this yields
\begin{align*}
f_r(n,k,s)&\le n^{s(r-1)+(c_2+r-1)s^{r-2}}((c_2+r-1)s^{r-2})^{s^{r-1}}\\
&=2^{(s(r-1)+(c_2+r-1)s^{r-2})\log n+s^{r-1}(\log(c_2+r-1)+(r-2)\log s)}\\
&\le 2^{(c_2+2r)s^{r-2}\log n+s^{r-1}(\log(c_2+r-1)+(r-2)\log s)}. \qquad \qquad \qed
\end{align*}

\section{Proof of Theorem~\ref{ColorKs:r-1} for $r=3$ and even $k$}

The rest of the paper is devoted to the proof of Theorem~\ref{ColorKs:r-1} for $r=3$ and even $k$. For simplicity of presentation, we write $k=2l$ where $l\ge 2$. Our two main tools are the following lemmas about edge-coloring bipartite graphs.

\begin{lemma}\label{NumberColorKst}
Let $l\ge 2, s,t\ge 1$, $G=K_{s,t}$ be an edge-colored complete bipartite graph with $V(G)\subset [n]$ and $Z=\{z_e:e\in G\}\subset [n]$ be the set of all colors. If $G$ contains no strongly rainbow colored $C_{2l}$, then $|Z|< 2l(s+t)$.
\end{lemma}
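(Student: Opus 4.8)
The plan is to prove the contrapositive: assuming $|Z|\ge 2l(s+t)$, produce a strongly rainbow colored $C_{2l}$ in $G=K_{s,t}$. One trivial reduction first: since each of the $st$ edges carries one color, $|Z|\le st$, so if $st<2l(s+t)$ there is nothing to prove; hence we may assume $st\ge 2l(s+t)$, which in particular forces $s,t\ge 2l$, and by symmetry assume $s\le t$.

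The natural first move is to pass to a rainbow subgraph. At most $|V(G)|=s+t$ colors of $Z$ lie in $V(G)$, so the set $Z'=Z\setminus V(G)$ of ``external'' colors satisfies $|Z'|\ge(2l-1)(s+t)$. For each $z\in Z'$ fix one edge $e_z\in G$ with $\chi(e_z)=z$, and let $G'=\{e_z:z\in Z'\}$. Then $\chi$ is injective on $G'$ and every color on $G'$ avoids $V(G)\supseteq V(G')$, so $G'$ --- and every subgraph of it --- is strongly rainbow; in particular any $C_{2l}\subseteq G'$ is a strongly rainbow $C_{2l}$. But this alone is far too weak: all it yields is $|Z|\le \ex_2(s+t,C_{2l})+(s+t)$, which is superlinear in $s+t$, whereas we want the linear bound $2l(s+t)$. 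So the argument must use that \emph{all} of $K_{s,t}$ is colored, not just one representative edge per color.

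The approach I would then take is to construct a strongly rainbow $C_{2l}$ greedily inside the whole of $K_{s,t}$: build a rainbow path of $2l-1$ edges with $l$ vertices on each side whose colors all avoid its vertex set, and close it into a $2l$-cycle. At a generic step the partial path extends to a new vertex on the other side by an edge of a new color unless very few colors appear on the edges incident to the current endpoint; and a global count --- $|Z|$ is at most the sum over $a\in A$ of the number of distinct colors on the edges at $a$, and likewise over $B$ --- shows that when $|Z|\ge 2l(s+t)$ there are too few such ``nearly lexical'' vertices to block the construction.

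The main obstacle is making this counting handle all the requirements simultaneously: remaining in a single bipartite alternation, avoiding the at most $2l$ already-used vertices and already-used colors, \emph{and} closing the path into a cycle of length exactly $2l$ rather than something longer --- and, above all, coping with the asymmetric regime $s\ll t$, where the smaller side is forced to carry few colors and the two vertex-sum bounds behave quite differently. That last point is precisely where the quantitative asymmetric bipartite canonical Ramsey estimates are needed.
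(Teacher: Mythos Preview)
Your opening move---passing to the strongly rainbow subgraph $G'$ with at least $(2l-1)(s+t)$ edges---is exactly what the paper does. Where you go wrong is in dismissing this as ``far too weak''. You assume the goal is to find a $C_{2l}$ \emph{inside $G'$}, which would indeed require $|G'|>\ex_2(s+t,C_{2l})$, a superlinear quantity. But that is not what is needed, and the paper does not do that.

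The missing idea is this: instead of hunting for $C_{2l}$ in $G'$, find a cycle of length $2\pmod{2l-2}$ in $G'$. A classical result (Bollob\'as/Thomassen type; the paper proves the special case needed as Lemma~\ref{CycleLength2modk}) says any graph on $N$ vertices with at least $(h+1)N$ edges contains a cycle of length $2\pmod h$. With $h=2l-2$ and $N=s+t$, the bound $(2l-1)(s+t)$ is exactly on the nose, so $G'$ contains a strongly rainbow cycle $C$ of length $2\pmod{2l-2}$. Now use that $G=K_{s,t}$ is complete bipartite: if $|C|>2l$, take a chord $e$ cutting $C$ into paths $P_1,P_2$ with $|P_1|=2l-1$; either $P_1\cup e$ is a strongly rainbow $C_{2l}$, or the color/endpoint obstruction forces $P_2\cup e$ to be a shorter strongly rainbow cycle of length $2\pmod{2l-2}$, and one iterates (this is the paper's Lemma~\ref{C2modimpliesC2l}). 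So the linear edge count in $G'$ really is enough.

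Your proposed alternative---greedy path-building plus the asymmetric bipartite canonical Ramsey estimates---conflates two different lemmas. The canonical Ramsey machinery is used in the paper for Lemma~\ref{ColorKst}, which counts colorings; it plays no role in the present lemma, whose proof is short and self-contained once you know to look for cycles of the right length modulo $2l-2$.
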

\smallskip

\begin{lemma}\label{ColorKst}
For each $l\ge 2$, there exists a constant $D=D_{l}>0$, such that the following holds. Let $s,t\ge 1$, $G=K_{s,t}$ be a complete bipartite graph with vertex set $V(G)\subset [n]$, and $Z\subset [n]$ be a set of colors. Then the number of ways to edge-color $G$ with $Z$ such that the extension $G^*$ contains no $C_{2l}$, is at most $D^{(s+t)^2}$.
\end{lemma}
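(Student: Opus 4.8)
The plan is to discard the extension and count edge-colorings of $K_{s,t}$ directly, via an induction on $s+t$ in which each level costs only a subquadratic factor. Since a strongly rainbow colored $C_{2l}\subset K_{s,t}$ already produces a loose $C_{2l}$ in the extension, every coloring counted in the lemma induces no strongly rainbow $C_{2l}$ on $K_{s,t}$, so it suffices to bound the number of colorings $\chi\colon E(K_{s,t})\to Z$ with $\chi(e)\notin e$ that contain no strongly rainbow $C_{2l}$; and by Lemma~\ref{NumberColorKst} any such coloring uses fewer than $2l(s+t)$ colors, so I take $|Z|<m:=2l(s+t)$ as a standing assumption (the only case relevant to the use of this lemma in Theorem~\ref{ColorKs:r-1}). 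Write $F(s,t)$ for the number of these colorings; the target is $F(s,t)\le D_l^{(s+t)^2}$.

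\emph{Base case.} If $\min(s,t)\le c_0=c_0(l)$ then $F(s,t)\le m^{st}\le(2l(s+t))^{c_0(s+t)}=2^{O((s+t)\log(s+t))}\le D_l^{(s+t)^2}$.

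\emph{Structural input and recursion.} For $\min(s,t)\ge C=C_l$ I would use a quantitative, asymmetric form of the bipartite canonical Ramsey theorem: if $K_{s,t}$ is edge-colored with no strongly rainbow $C_{2l}$, then (a) at most $C$ colors occur in total, or (b) there is $B'\subseteq B$ with $|B'|\ge t/C$ on which the coloring is "$A$-lexical", i.e.\ $\chi(ab)$ depends only on $a$ for $a\in A$, $b\in B'$, or (c) the mirror image of (b) with some $A'\subseteq A$, $|A'|\ge s/C$. Given this, one fixes canonically which alternative holds; in case (b) one records a canonical size-$\lfloor t/C\rfloor$ subset $B''\subseteq B'$ and the map $a\mapsto\chi(ab_0)$ ($b_0\in B''$). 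Deleting $B''$ leaves a coloring of $K_{s,\,t-\lfloor t/C\rfloor}$ with no strongly rainbow $C_{2l}$, and the original coloring is recovered from it together with the recorded data, of which there are at most $2^{t}m^{s}=2^{O((s+t)\log(s+t))}$ choices; case (c) is symmetric, and alternative (a) contributes at most $m^{C}C^{st}$. Thus, up to a factor $3$ for the choice of alternative,
\[ F(s,t)\ \le\ 3\,\max\Bigl\{\,m^{C}C^{st},\ \ 2^{O((s+t)\log(s+t))}\,F\bigl(s,\,t-\lfloor t/C\rfloor\bigr),\ \ 2^{O((s+t)\log(s+t))}\,F\bigl(s-\lfloor s/C\rfloor,\,t\bigr)\Bigr\}. \]
Each of $s$ and $t$ can be shrunk by a factor $1-1/C$ only $O(\log(s+t))$ times before $\min(s,t)$ falls below $c_0$, so the accumulated overhead is $2^{O((s+t)(\log(s+t))^2)}=2^{o((s+t)^2)}$; combined with the leaf term $m^{C}C^{st}\le C^{(s+t)^2}$ (for $s+t$ large) this gives $F(s,t)\le D_l^{(s+t)^2}$ for suitable $D_l$.

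\emph{Main obstacle.} The entire difficulty lies in the structural input. Qualitative canonical Ramsey only yields \emph{some} canonically colored $C_{2l}$; I would need to upgrade it to (i) polynomial control of the sizes (a constant-factor, not a root-order, loss per level), so that the induction costs only $2^{o((s+t)^2)}$ in all; (ii) the asymmetry that one side of the lexical block stays complete; and (iii) control of the gap between "rainbow" and "strongly rainbow" (a rainbow $C_{2l}$ one of whose colors is a vertex of the cycle does \emph{not} give a loose $C_{2l}$ in the extension — excluding these is precisely where $|Z|<2l(s+t)$ and a separate accounting of the internally colored edges come in). Establishing this quantitative asymmetric bipartite canonical Ramsey statement — presumably by iterating on vertex neighborhoods to extract either a bounded palette common to many vertices or a large lexical block — is the real work.
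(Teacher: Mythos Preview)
Your high-level plan (induction on $s+t$, driven by a bipartite canonical Ramsey statement, with Lemma~\ref{NumberColorKst} bounding the palette) is exactly the architecture the paper uses. The gap is in the structural input you are asking for and in what you do with it.

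\textbf{The trichotomy you posit is too strong.} You want, in the non-trivial case, a lexical block covering \emph{all} of one side and a constant fraction of the other. The paper's asymmetric bipartite canonical Ramsey theorem (Theorem~\ref{CanonialRamsey}) only delivers, besides a rainbow $K_{4l,4l}$ (which is then shown to be impossible), a $Q$-canonical or monochromatic $K_{q,2l}$ with $q=s^{\epsilon}$ for some $\epsilon=\epsilon(l)>0$ --- that is, a polynomially small piece of one side against a mere $2l$ vertices of the other. There is no indication that a constant-fraction lexical block exists in general, and the paper neither proves nor needs it. So the dichotomy (a)/(b)/(c) as you stated it is, as far as this paper is concerned, unsupported; your ``main obstacle'' is not just unproved but quite possibly false at that strength.

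\textbf{Consequently, ``record the lexical map and delete $B''$'' is not the recursion.} With only a canonical (or monochromatic) $K_{q,2l}$ on $Q\sqcup R$, the induction step removes $Q$, not a constant fraction of $B$, and the whole difficulty is to bound $\#E(Q,Y)$ --- the colorings of \emph{all} edges from $Q$ to the full other side, not just to $R$. The paper does this by a path-length argument you have not identified: since any strongly rainbow path of length $2l-2$ (respectively $4l-2$) inside $E(Q,Y_1)$ with both ends in $Q$ can be closed to a loose $C_{2l}$ in $G^*$ by routing through $R$ (Claims~\ref{PathLengthII} and~\ref{PathLengthIII}), one stratifies $Q$ by the maximal strongly-rainbow path length and shows that for most $y$ the edges $E(Q_i,y)$ are forced into $O_l(1)$ colors (Claims~\ref{ColorQY1II} and~\ref{ColorQY1III}). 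This yields $\#E(Q,Y)\le C_l^{\,qb}\cdot\sigma^{O_l(q+b)}$-type bounds, which are then beaten by the inductive gain $D^{(s+t)^2}/D^{(s+t-q)^2}\approx D^{2q(s+t)}$. That path-closing analysis, not a stronger Ramsey statement, is where the proof's work lies; your outline replaces it with an assumption that does not follow from the canonical Ramsey theorem actually available.
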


The proofs of these  lemmas  require several new ideas which will be presented in the rest of the paper. 
Here we quickly show that they imply Theorem~\ref{ColorKs:r-1} for $r=3$ and even $k$.
\medskip

{\bf Proof of Theorem~\ref{ColorKs:r-1} for $r=3$ and $k=2l$.}
Recall that $r \ge 3$, $l \ge 2$, and that $f_3(n,2l,s)$ is the  number of edge-colored copies of $K_{s,s}$
whose vertex set lies in $[n]$ and whose (3-uniform) extension is $C_{2l}$-free. To obtain such a copy of $K_{s,s}$, we first choose from $[n]$ its $2s$ vertices, then its at most $4ls$ colors by Lemma~\ref{NumberColorKst} and finally we color this $K_{s,s}$ by Lemma~\ref{ColorKst}.
This yields
$$f_3(n,2l,s)\le n^{2s+4ls}D^{(2s)^2}\le2^{5ls\log n+4s^2\log D}=2^{(5/2)ks\log n+4s^2\log D},$$ 
where the last inequality holds since $l \ge 2$. Note that $D=D_{l}=D_{k/2}$ is the desired constant.
\qed

\section{Proof of Lemma~\ref{NumberColorKst}}

In this section we prove Lemma~\ref{NumberColorKst}. Our main tool is an extremal result about cycles modulo $h$ in a graph. This problem has a long history, beginning with a Conjecture of Burr and Erd\H{o}s that was solved by Bollob\'as~\cite{B77} in 1976 via the following result: for each integer $m$ and odd positive integer $h$, every graph $G$ with minimum degree $\delta(G)\ge 2((h + 1)^h -1)/h$ contains a cycle of length congruent to $m$ modulo $h$. The lower bound on $\delta(G)$ was improved by Thomassen~\cite{T83} who also generalized it to the case with all integers $h$. It was conjectured by Thomassen that graphs with minimum degree at least $h+1$ contain a cycle of length $2m$ modulo $h$, for any $h, m\ge 1$. The conjecture received new attention recently. In particular Liu-Ma~\cite{LM15} settled the case when $h$ is even, and Diwan~\cite{D10} proved it for $m=2$. To date, Sudakov and Verstra\"ete~\cite{SV15} hold the best known bound for the general case on this problem.

We need the very special case $m=1$ of Thomassen's conjecture and in order to be self contained, we give a proof below. The idea behind this proof can be found in Diwan~\cite{D10}.

\begin{lemma}\label{CycleLength2modk}
If $G$ is an $n$-vertex graph with at least $(h+1)n$ edges, then $G$ contains a cycle of length $2$ modulo $h$.
\end{lemma}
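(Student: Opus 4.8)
The statement is that any $n$-vertex graph $G$ with at least $(h+1)n$ edges contains a cycle whose length is $\equiv 2 \pmod h$. First I would pass to a subgraph with large minimum degree: a standard greedy deletion argument shows that $G$ contains a subgraph $G'$ with $\delta(G') \ge |E(G)|/n \ge h+1$ (repeatedly delete a vertex of degree at most $h$; this removes at most $hn$ edges in total, so something of positive size survives, and in fact one gets minimum degree at least $h+1$). So it suffices to prove: every graph with minimum degree at least $h+1$ contains a cycle of length $2 \pmod h$. This is the $m=1$ case of Thomassen's conjecture, and the excerpt explicitly says we follow Diwan~\cite{D10}.

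\textbf{The main argument via a DFS/longest-path analysis.} Take $G'$ with $\delta(G') \ge h+1$, and consider a depth-first search tree, or equivalently pick a longest path $P = v_0 v_1 \cdots v_p$ in $G'$ (or in a suitable 2-connected block, after first reducing to the 2-connected case, which is harmless since a cycle lives inside a block and every block still has minimum degree at least $h+1$ or is an edge — so we may assume $G'$ is 2-connected). All neighbors of the endpoint $v_0$ lie on $P$ since $P$ is longest; write the neighbors of $v_0$ as $v_{a_1}, v_{a_2}, \ldots$ with $1 = a_1 < a_2 < \cdots < a_d$ where $d = d(v_0) \ge h+1$. Each chord $v_0 v_{a_i}$ together with the path segment $v_0 v_1 \cdots v_{a_i}$ gives a cycle of length $a_i + 1$. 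Among the $d \ge h+1$ values $a_1 < \cdots < a_d$, I want to exhibit two (or a configuration) producing a cycle of length $2 \bmod h$: if some $a_i + 1 \equiv 2$, i.e. $a_i \equiv 1 \pmod h$, we are done, so assume not. Then one looks at differences: consecutive neighbors $v_{a_i}, v_{a_{i+1}}$ of $v_0$, and the cycle through $v_0$, the segment $v_0 \cdots v_{a_i}$, reversed, ... — more precisely Diwan's trick combines two chords from $v_0$ to produce cycles of lengths $a_j + 1$ and $a_i + 1$ and also the "theta-graph" cycle of length $(a_j - a_i) + (\text{something})$. The clean version: among $h+1$ cycle lengths arising as $a_i+1$ for $i$ in a pigeonhole class, two are congruent mod $h$, say $a_i \equiv a_j \pmod h$ with $i<j$; then the cycle $v_0 v_1 \cdots v_{a_i} v_0$ and the path/cycle structure let one "swap" and obtain a cycle of length $(a_j + 1) - (a_j - a_i) = a_i + 1$ or alternatively of length $a_j - a_i + 2$ by using both chords $v_0v_{a_i}, v_0 v_{a_j}$ through the vertex $v_0$: going $v_0 \to v_{a_i} \to v_{a_i+1} \to \cdots \to v_{a_j} \to v_0$ is a cycle of length $(a_j - a_i) + 2$. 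Since $a_j \equiv a_i$, this length is $\equiv 2 \pmod h$. That is the cycle we want.

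\textbf{Where the care is needed.} The one genuinely delicate point is ensuring the pigeonhole produces $a_i \equiv a_j \pmod h$ with the resulting cycle length $a_j - a_i + 2$ actually realized as a subgraph — this needs $v_{a_i}$ and $v_{a_j}$ to be joined along $P$, which they are, and needs $v_0$ adjacent to both, which is the defining property of the $a$'s; so the cycle $v_0, v_{a_i}, v_{a_i+1}, \ldots, v_{a_j}, v_0$ is honest. The subtlety is only that we must first dispose of the case where some single chord already gives length $\equiv 2$, and handle $d(v_0) \ge h+1$ giving us $h+1$ residues among $\{a_1,\ldots,a_d\}$ so that either one equals the target or two coincide — this is exactly $h+1$ numbers and $h$ residue classes, so by pigeonhole two of $a_1, \ldots, a_{h+1}$ are congruent mod $h$, completing the proof. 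The reduction to 2-connected (or just to minimum degree $h+1$, using a longest path whose endpoint has all neighbors on the path) and the initial edge-count-to-min-degree step are routine. I expect the pigeonhole-and-two-chords step to be the crux, but it is short once set up correctly.
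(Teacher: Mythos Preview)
Your proposal is correct and follows essentially the same route as the paper: reduce to minimum degree at least $h+1$ by greedy deletion, take a longest path so that the endpoint $v_0$ has all its $\ge h+1$ neighbours $v_{a_1},\dots,v_{a_d}$ on the path, and use pigeonhole on the indices $a_i$ modulo $h$ to find $a_i\equiv a_j$ and hence the two-chord cycle $v_0 v_{a_i} v_{a_i+1}\cdots v_{a_j} v_0$ of length $(a_j-a_i)+2\equiv 2\pmod h$. Two minor clean-ups: the reduction to $2$-connectivity is unnecessary (the longest-path argument already forces $N(v_0)\subset V(P)$), and your intermediate case ``some $a_i\equiv 1$'' is both superfluous and slightly off (since $a_1=1$ always satisfies it but yields a degenerate ``cycle'' of length $2$); the direct pigeonhole you state at the end --- $h+1$ indices, $h$ residue classes, hence two coincide and $a_j-a_i\ge h$ gives an honest cycle --- is exactly what the paper does and is all that is needed.
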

\begin{proof}
By removing vertices of degree at most $h$, we may assume that $G$ has minimum degree at least $h+1$. Let $P$ be a longest path in $G$. Assume that $P$ is of length $l$. Let $V(P)=\{x_0,x_1,\ldots,x_l\}$, where $x_0, x_l$ are the two end-vertices of $P$, and $x_{i-1}x_i\in P$ for all $i\in [l]$. Then we observe that $N(x_0)\subset V(P)$. Otherwise we can extend $P$ to a longer path by $x_0y$ with some vertex $y\in N(x_0)\setminus V(P)$. So $N(x_0)=\{x_i:i\in I\}$ for some $I\subset [l]$. Note that $1\in I$, $|I|=|N(x_0)|\ge \delta(G)\ge h+1$, and the distance $\text{dist}_P(x_i,x_j)=|j-i|$. Consider the set $J=\{i-1:i\in I, i\neq 1\}$, note that this is the set of all distances $\text{dist}_P(x_1,x_i)$ with $x_i\in N(x_0)$ and $i\neq 1$. Clearly, $|J|=|I|-1\ge h$. If there exists $i-1 \in J$ with $i-1\equiv 0$ (mod $h$), then we are done, since the sub-path of $P$ from $x_1$ to $x_i$ together with $x_0x_1$, $x_0x_i$ form a cycle of length $2$ modulo $h$. So none of the numbers in $J$ are multiples of $h$. By the pigeonhole principle, there are at least two elements $i-1,j-1\in J$ such that $i-1\equiv j-1$ (mod $h$), thus $\text{dist}_{P}(x_i,x_j)=|j-i|\equiv 0$ (mod $h$). Again, we can find a cycle of length 2 modulo  $h$ by taking the sub-path of $P$ connecting $x_i, x_j$ and edges $x_0x_i, x_0x_j$. 
\end{proof}

\begin{lemma}\label{C2modimpliesC2l}
Let integers $l \ge 2$, $s,t\ge 1$, $G=K_{s,t}$ with $V(G)\subset [n]$ be edge-colored. If $G$ contains a strongly rainbow colored cycle of length $2$ {\rm(mod  $2l-2$)}, then $G$ contains a strongly rainbow colored $C_{2l}$.
\end{lemma}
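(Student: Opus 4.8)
The plan is to induct on the length $m$ of a strongly rainbow cycle $C\subset G$ with $m\equiv 2\pmod{2l-2}$, proving that either $C$ itself is a strongly rainbow $C_{2l}$, or one can pass to a \emph{strictly shorter} strongly rainbow cycle that is still $\equiv 2\pmod{2l-2}$. Since $G$ is bipartite, $m$ is even, and among even integers the residue class $2\pmod{2l-2}$ consists of $2,2l,4l-2,\dots$; as $l\ge 2$ the smallest value that can be a cycle length is $2l$. So the base case $m=2l$ is immediate, and for the inductive step we may assume $m>2l$, which forces $m\ge 4l-2$.

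Write $C=v_0v_1\cdots v_{m-1}v_0$, with color $z_i=\chi(v_iv_{i+1})$ (indices mod $m$); being strongly rainbow means the $z_i$ are pairwise distinct and $z_i\notin V(C)$ for all $i$. Consider the chord $e=v_0v_{2l-1}$. Because $2l-1$ is odd, $v_0$ and $v_{2l-1}$ lie in opposite parts, so $e$ is an edge of $G=K_{s,t}$, and $e\notin C$ since $1<2l-1<m-1$. Let $c=\chi(e)$. The chord $e$ splits $C$ into two arcs: $P_1=v_0v_1\cdots v_{2l-1}$ of length $2l-1$, and $P_2=v_{2l-1}v_{2l}\cdots v_{m-1}v_0$ of length $m-2l+1$; both lengths are $\equiv 1\pmod{2l-2}$ (their sum is $m\equiv 2$). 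Form the cycles $C_1=P_1+e$ and $C_2=P_2+e$; then $C_1$ has length $2l$, while $C_2$ has length $m-2l+2$, which is $\equiv 2\pmod{2l-2}$ and strictly less than $m$.

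The key observation is that $V(C_1)\cap V(C_2)=\{v_0,v_{2l-1}\}$, while $E(P_1)$ and $E(P_2)$ are disjoint subsets of $E(C)$, so the colors on $P_1$ are distinct from those on $P_2$, and none of these colors lies in $V(C)$. Hence the set of ``forbidden values'' for $c$ that would spoil $C_1$ (the colors on $P_1$ together with $V(C_1)$) and the set that would spoil $C_2$ (the colors on $P_2$ together with $V(C_2)$) intersect only in $\{v_0,v_{2l-1}\}$. Since $\chi(e)$ is by definition a color in $[n]\setminus e$, we have $c\notin\{v_0,v_{2l-1}\}$, so $c$ lies outside at least one of the two forbidden sets. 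If $c$ avoids the first, then $C_1$ is a strongly rainbow $C_{2l}$ (its $P_1$-edges retain distinct, non-vertex colors, and $c$ differs from all of them and is not a vertex of $C_1$), and we are done. If $c$ avoids the second, then $C_2$ is a strongly rainbow cycle of length $\equiv 2\pmod{2l-2}$ shorter than $C$, and we conclude by induction.

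The one genuine idea is the choice of a chord both of whose arcs have length $\equiv 1\pmod{2l-2}$: this is exactly what forces \emph{both} derived cycles into the correct residue class while making their forbidden sets overlap only in the two chord endpoints, which in turn is where the constraint $\chi(e)\notin e$ is used decisively. The only point requiring a moment's care is the boundary case $m=4l-2$, where $P_2$ also has length $2l-1$ and $C_2$ is itself a $C_{2l}$ — the argument is unchanged, one simply obtains a strongly rainbow $C_{2l}$ directly at that step rather than recursing.
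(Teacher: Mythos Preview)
Your proof is correct and follows essentially the same approach as the paper: take a chord cutting off an arc of length $2l-1$, observe that the two ``forbidden sets'' for the chord's color (colors and internal vertices of each arc) are disjoint once the endpoints $\{v_0,v_{2l-1}\}$ are removed, and conclude that the chord completes either a strongly rainbow $C_{2l}$ or a shorter strongly rainbow cycle in the correct residue class. The only cosmetic difference is that the paper phrases this as a minimal-counterexample argument rather than an explicit induction.
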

\begin{proof}
Let us assume that $C$ is the shortest strongly rainbow colored cycle of length $2$ modulo $2l-2$ in $G$. Then $C$ has at least $2l$ edges. We claim that $C$ is a $C_{2l}$. Suppose not, let $e$ be a chord of $C$ (such a chord exists as $G$ is complete bipartite), such that $C$ is cut up into two paths $P_1$ and $P_2$ by the two endpoints of $e$, and $|P_1|=2l-1$.  Let $Z_1, Z_2$ be the set of their colors respectively. If the color $z_e\notin Z_1\cup V(P_1)\setminus e$, then $P_1\cup e$ is a strongly rainbow colored cycle of length $2l$, a contradiction. Therefore $z_e\in Z_1\cup V(P_1)\setminus e$,  but then $z_e\notin Z_2\cup V(P_2)\setminus e$, yielding a shorter strongly rainbow colored cycle $P_2\cup e$ of length $2$ modulo $2l-2$, a contradiction.
\end{proof}

We now have all the necessary ingredients to prove Lemma~\ref{NumberColorKst}.

\medskip

{\bf Proof of Lemma~\ref{NumberColorKst}.}
Suppose that $|Z|\ge 2l(s+t)$. Then $|Z\setminus V(G)|\ge (2l-1)(s+t)$. For each color $v$ in  $Z\setminus V(G)$, pick an edge $e$ of $G$ with color $v$. We obtain a strongly rainbow colored subgraph $G'$ of $G$ with at least $(2l-1)(s+t)$ edges.
Lemma~\ref{CycleLength2modk} guarantees the existence of a rainbow colored cycle of length $2$ modulo $2l-2$ in $G'$. By construction, this cycle is strongly rainbow. Lemma~\ref{C2modimpliesC2l} then implies that there is a strongly rainbow colored $C_{2l}$ in $G$. \qed

\section{Proof of Lemma~\ref{ColorKst}}

Our proof of Lemma~\ref{ColorKst} is inspired by the methods developed in~\cite{KMV15}. The main idea is to use the bipartite canonical Ramsey theorem. In order to use this approach we  need to develop some new quantitative estimates for an asymmetric version of the bipartite canonical Ramsey theorem. 

\subsection{Canonical Ramsey theory}
In this section we state and prove the main result in Ramsey theory that we will use to prove Lemma~\ref{ColorKst}. We are interested in counting the number of edge-colorings of a bipartite graph, such that the (3-uniform) extension contains no copy of $C_{2l}$. The canonical Ramsey theorem allows us to find nice colored structures that are easier to work with. However, the quantitative aspects are important for our application and consequently we need to prove various bounds for bipartite canonical Ramsey numbers. We begin with some definitions.

Let $G$ be a bipartite graph on vertex set with bipartition $X\sqcup Y$. For any subsets $X'\subset X$, $Y'\subset Y$, let $E_G(X',Y')=G[X'\sqcup Y']=\{xy\in G: x\in X', y\in Y'\}$, and $e_G(X',Y')=|E_G(X',Y')|$. If $X'$ contains a single vertex $x$, then $E_G(\{x\},Y')$ will be simply written as $E_G(x,Y')$. The subscript $G$ may be omitted if it is obvious from context.
\smallskip

\begin{definition}
Let $G$ be an edge-colored bipartite graph with $V(G)=X\sqcup Y$.
\begin{itemize}
\item $G$ is  \emph{monochromatic} if all edges in $E(X,Y)$ are colored by the same color. 
\item $G$ is  \emph{weakly $X$-canonical} if $E(x,Y)$ is monochromatic for each $x\in X$. 
\item $G$ is \emph{$X$-canonical} if it is \emph{weakly $X$-canonical}
and for all distinct $x,x'\in X$ the colors used on $E(x,Y)$ and $E(x',Y)$ are all different.
\end{itemize}
 In all these cases, the color $z_x$ of the edges in $E(x,Y)$ is called a \emph{canonical color}.
\end{definition}
\smallskip

\begin{lemma}\label{WeaklyCanonical}
Let $G=K_{a,b}$ be an edge-colored complete bipartite graph  with bipartition $A\sqcup B$, with $|A|=a, |B|=b$. If $G$ is weakly $A$-canonical, then there exists a subset $A'\subset A$ with $|A'|=\sqrt{a}$ such that $G[A'\sqcup B]=K_{\sqrt{a},b}$ is $A'$-canonical or monochromatic.
\end{lemma}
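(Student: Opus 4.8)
The plan is to extract $A'$ greedily by a counting/pigeonhole argument on the canonical colors. Since $G$ is weakly $A$-canonical, each vertex $x \in A$ carries a well-defined canonical color $z_x$, so we have a map $x \mapsto z_x$ from $A$ to the palette of colors. This map partitions $A$ into color classes. If some single color class has size at least $\sqrt a$, we take $A'$ to be (any $\sqrt a$ vertices of) that class: then all edges of $G[A' \sqcup B]$ receive that one color, so $G[A' \sqcup B]$ is monochromatic, and we are done. Otherwise every color class has size strictly less than $\sqrt a$.

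In the remaining case, I would build $A'$ one vertex at a time so that all chosen vertices have pairwise distinct canonical colors. Start with $A' = \emptyset$; repeatedly pick any $x \in A$ whose canonical color $z_x$ differs from all canonical colors already used by vertices in $A'$, and add it. Each vertex already in $A'$ ``blocks'' only the members of its own color class, and that class has size less than $\sqrt a$; so after choosing $i$ vertices we have blocked fewer than $i\sqrt a$ vertices of $A$, and as long as $i\sqrt a < a$, i.e. $i < \sqrt a$, there is still an unblocked vertex to pick. Hence we can continue until $|A'| = \sqrt a$. By construction the canonical colors $z_x$, $x \in A'$, are pairwise distinct, so $G[A' \sqcup B]$ is weakly $A'$-canonical and has all these colors distinct across the classes $E(x,Y)$, $x \in A'$; that is exactly the definition of $A'$-canonical. (Throughout I am ignoring divisibility of $\sqrt a$, consistent with the paper's convention on floors and ceilings.)

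The only subtlety — and it is mild — is bookkeeping the two cases cleanly so that the conclusion ``$A'$-canonical \emph{or} monochromatic'' is literally achieved: the monochromatic outcome arises precisely when a color class is large, and the $A'$-canonical outcome when all classes are small. I do not anticipate a genuine obstacle here; the lemma is essentially a pigeonhole dichotomy (a large part from one class, or a transversal of $\sqrt a$ classes), and the bound $\sqrt a$ is exactly the threshold at which one of the two alternatives must hold.
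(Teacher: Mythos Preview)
Your proof is correct and essentially identical to the paper's: both hinge on the pigeonhole dichotomy between having a color class of size at least $\sqrt a$ (monochromatic outcome) and having at least $\sqrt a$ distinct canonical colors (canonical outcome). The paper phrases it by taking a maximal $A'$-canonical subset and then applying pigeonhole when it is small, whereas you split on the size of the largest color class first and then build the transversal greedily, but the content is the same.
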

\begin{proof}
Take a maximal subset $A'$ of $A$ such that the coloring on $E(A',B)$ is $A'$-canonical. If $|A'|\ge \sqrt{a}$, then we are done. So, we may assume that $|A'|<\sqrt{a}$. By maximality of $A'$, there are less then $\sqrt{a}$ canonical colors. By the pigeonhole principle, there are at least $|A|/|A'|\ge a/\sqrt{a} = \sqrt{a}$ vertices of $A$ sharing the same canonical color, which gives a monochromatic $K_{\sqrt{a},b}$.
\end{proof}

Our next lemma guarantees that in an ``almost" rainbow colored complete bipartite graph, there exists a rainbow complete bipartite graph.
\begin{lemma}\label{FindRainbowKcc}
For any integer $c\ge 2$, and $p>c^4$, if $G=K_{p,p}$ is an edge-colored complete bipartite graph, in which each color class is a matching, then $G$ contains a rainbow colored $K_{c,c}$.
\end{lemma}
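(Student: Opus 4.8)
The plan is to greedily build the two sides of the desired $K_{c,c}$ one vertex at a time, at each step restricting attention to a large subset of vertices on the opposite side that ``behaves rainbow'' with respect to the chosen vertices so far. Concretely, I will maintain sets $A_i \subset X$, $B_i \subset Y$ with $|A_i|=i$, such that the colors appearing on $E(A_i, B_i)$ are all distinct and still $|B_i|$ is large, then symmetrically extend on the $Y$-side; iterating $c$ times on each side yields the rainbow $K_{c,c}$.

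Here is the key mechanism. Suppose we have selected $a$ vertices $x_1,\dots,x_a$ on the $X$-side and a set $B \subset Y$ of ``survivors'' such that all edges from $\{x_1,\dots,x_a\}$ to $B$ receive pairwise distinct colors. When we add a new vertex $x_{a+1}$ (chosen from the remaining vertices of $X$), we must delete from $B$ any vertex $y$ for which the color of $x_{a+1}y$ coincides with a color already used on $E(\{x_1,\dots,x_a\}, B)$, or with the color of $x_{a+1}y'$ for some other surviving $y'$. Since each color class is a matching, a fixed color can be ``responsible'' for deleting at most one vertex of $B$ (the unique partner of $x_{a+1}$ in that color class, if any). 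The number of colors to avoid is at most $a|B|$ (the colors already present) plus we handle the within-$x_{a+1}$ repetitions separately — but again, because color classes are matchings, $E(x_{a+1}, B)$ is automatically rainbow, so there are no within-row repetitions to worry about. Thus adding $x_{a+1}$ costs at most $a$ deletions from $B$: if color $z$ appears on $E(\{x_1,\dots,x_a\},B)$ via the edge $x_i y_i$, then $z$ can reappear only as the color of the single edge of its matching class incident to $x_{a+1}$, killing at most one $y \in B$; summing over the at most $a|B|/|B| $... more carefully, over the $a$ rows we have used, each row contributes a set of colors of size $|B|$, but the total set of forbidden colors has size at most $a|B|$, and among these at most $a$ can reappear at $x_{a+1}$ — wait, that is not quite right either, so let me restate: the colors used so far on $E(\{x_1,\dots,x_a\},B)$ number exactly $a|B|$; each such color, being a matching, has at most one edge at $x_{a+1}$, hence forbids at most one vertex of $B$; but a priori this could forbid up to $a|B|$ vertices. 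The fix is to track it differently: after choosing $x_{a+1}$, restrict $B$ to those $y$ whose color $c(x_{a+1}y)$ is new. Since $c(x_{a+1},\cdot)$ restricted to $B$ is injective (matching), the map $y \mapsto c(x_{a+1}y)$ is injective, so the number of $y$ we must discard equals the number of old colors that are hit, which is at most $|\{\text{old colors}\}| = a \cdot |B_{\text{prev}}|$ intersected with the image — bounded by $\min(|B|, a|B_{\text{prev}}|)$. This is too weak, so instead I will pass to subsets geometrically: start with $|B_0|=p$, and at step $i$ keep $|B_{i+1}| \ge |B_i| - (\text{old colors hitting } x_{i+1})$.

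The cleanest route, and the one I expect to use, is a double counting / pigeonhole argument rather than naive greedy: among the $p$ vertices of $X$, and the $p$ of $Y$, first find $A' \subset X$ of size roughly $p/c$ and $B' \subset Y$ of size roughly $p/c$ such that $E(A',B')$ uses each color at most once restricted to... Actually the sharpest approach: pick $B' \subset Y$ with $|B'| = c$ uniformly at random; the expected number of ``bad'' pairs $x \in X$ with two edges to $B'$ of the same color is controlled because color classes are matchings, so a color contributes a repeated pair at $x$ only if two of its (at most, total) $p$ edges land in $B'\times\{x\}$ — impossible since a matching has at most one edge at $x$. So in fact for \emph{every} vertex $x$, the edges $E(x,B')$ are rainbow; the only obstruction to $A' = X, B' $ giving a rainbow $K_{c,c}$ with $c$ rows is cross-row color collisions. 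Then I select rows greedily: having chosen rows $x_1,\dots,x_i$ keeping the full $E(\{x_1,\dots,x_i\},B')$ rainbow is impossible to maintain with $B'$ fixed, so I shrink $B'$: at each step delete from $B'$ one endpoint of each monochromatic pair between the new row and an old one; since each old color has $\le 1$ edge at $x_{i+1}$, the number of deletions at step $i+1$ is at most $i \cdot c$ — no. The honest bound is: deletions $\le$ number of colors appearing on old rows that also appear at $x_{i+1}$, which is $\le \min(c, i\cdot|B'_{\text{cur}}|)$. I will instead invoke that $|B'|$ drops by a factor at most $(1 - (i-1)/|B'|)$...

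\medskip

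I expect the main obstacle to be bookkeeping the deletions so that $|B'|$ stays $\ge c$ throughout all $2c$ steps; the bound $p > c^4$ is exactly the slack needed for a two-sided geometric shrinking argument (shrink to $p^{1/2}$ after fixing the $X$-side, then to $p^{1/4} > c$ after fixing the $Y$-side), and I would make the greedy-with-shrinking precise by showing that after committing to $i$ rows the survivor set has size $\ge p / (\text{poly in } i, c)$, using that each color is a matching to cap per-step losses. Combining with the symmetric step on $Y$ completes the proof.
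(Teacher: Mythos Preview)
Your proposal does not contain a proof. You cycle through several approaches---row-by-row greedy, random choice of one side, ``two-sided geometric shrinking''---and correctly spot the weakness in each, but you never close any of them. The central gap is in the greedy step: after $a$ rows with survivor set $B$ are fixed (all $a|B|$ colors distinct), you want to add a new row $x_{a+1}$ and delete the $y\in B$ whose color $c(x_{a+1}y)$ is old. You note that each old color, being a matching, has at most one edge at $x_{a+1}$; but there are $a|B|$ old colors, so the bound you get on deletions is $\min(|B|,a|B|)=|B|$, which says nothing. You see this yourself (``a priori this could forbid up to $a|B|$ vertices'') and never repair it. The final ``shrink to $p^{1/2}$ then $p^{1/4}$'' sentence is a wish, not an argument: you give no mechanism by which the survivor set stays of size $p^{1/2}$ after $c$ rows are committed.

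The paper's proof avoids this entirely by picking \emph{both} sides at once, uniformly at random, and union-bounding over monochromatic \emph{pairs} of edges rather than tracking survivors. The key count you are missing is global: since each color class is a matching of size at most $p$, the total number of monochromatic pairs of edges in $K_{p,p}$ is at most $\sum_{z}\binom{m_z}{2}\le \tfrac{1}{2}\sum_z m_z\cdot p=p^3/2$. For a fixed pair of disjoint edges, the probability that both land in a random $K_{c,c}$ is $\bigl(c(c-1)/p(p-1)\bigr)^2$, so the expected number of bad pairs is below $1$ once $p>c^4$. This one-shot random selection is what makes the $c^4$ threshold fall out cleanly; your row-by-row schemes do not exploit the global bound on monochromatic pairs and so cannot reach it.
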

\begin{proof}
Let $A\sqcup B$ be the vertex set of $G$. Pick two $c$-sets $X,Y$ from $A$ and $B$ respectively at random with uniform probability. For any pair of monochromatic edges $e, e'$, the probability that they both appear in the induced subgraph $E(X,Y)$ is
$$\left(\frac{\binom{p-2}{c-2}}{\binom{p}{c}}\right)^2=\left(\frac{c(c-1)}{p(p-1)}\right)^2.$$
On the other hand, the total number of pairs of monochromatic edges is at most $p^3/2$, since every color class is a  matching.  Therefore the union bound shows that, when $p>c^4$, the probability that there exists a monochromatic pair of edges in $E(X,Y)$ is at most
$$\frac{p^3}{2}\left(\frac{c(c-1)}{p(p-1)}\right)^2 = \frac{pc^4}{2(p-1)^2}<1.$$
Consequently,  there exists a choice of $X$ and $Y$ such that the $E(X,Y)$ contains no pair of monochromatic edges. Such an $E(X,Y)$ is a rainbow colored $K_{c,c}$.
\end{proof}

Now we are ready to prove the main result of this section which is a quantitative version of a result from~\cite{KMV15II}. Note that the edge-coloring in this result uses an arbitrary set of colors. Since the conclusion is about ``rainbow" instead of ``strongly rainbow", it is not essential to have the set of colors disjoint from the vertex set of the graph.

\begin{theorem}[Asymmetric bipartite canonical Ramsey theorem]\label{CanonialRamsey}
For any integer $l\ge 2$, there exists real numbers $\epsilon=\epsilon(l)>0, s_0=s_0(l)>0$, such that if $G=K_{s,t}$ is an edge-colored complete bipartite graph on vertex set $X\sqcup Y$ with $|X|=s, |Y|=t$ with $s>s_0$ and $s/\log s < t\le s$, then one of the following holds:
\begin{itemize}
\item $G$ contains a rainbow colored $K_{4l,4l}$,
\item $G$ contains a $K_{q,2l}$ on vertex set $Q\sqcup R$, with $|Q|=q, |R|=2l$ that is $Q$-canonical, where $q=s^{\epsilon}$,
\item $G$ contains a monochromatic $K_{q,2l}$ on vertex set $Q\sqcup R$, with $|Q|=q, |R|=2l$, where $q=s^\epsilon$.
\end{itemize}
Note that in the last two cases, it could be $Q\subset X, R\subset Y$ or the other way around.
\end{theorem}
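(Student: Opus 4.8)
The plan is to iterate the structural dichotomy: either an edge-colored complete bipartite graph has many colors, in which case we can extract a large subgraph in which every color class is a small matching (and then invoke Lemma~\ref{FindRainbowKcc} to get a rainbow $K_{4l,4l}$), or it is highly structured in the sense that some vertex has low color-degree, which we exploit to build a canonical or monochromatic piece. Concretely, for $x\in X$ let $d^*(x)$ denote the number of distinct colors appearing on $E(x,Y)$. First I would split into two cases according to whether many vertices $x\in X$ have $d^*(x)$ large (say $\ge t^{\delta}$ for a suitable small $\delta=\delta(l)$) or not.

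If a constant fraction of vertices of $X$ has large color-degree, I would use a counting/probabilistic argument to find a subset $X'\subset X$ of size $\mathrm{poly}(s)$ and $Y'\subset Y$ of size $\mathrm{poly}(t)$ such that the induced coloring on $E(X',Y')$ has every color class of bounded size, i.e.\ after a further random restriction every color class becomes a matching. Here the inequality $t>s/\log s$ is used to ensure $t$ is polynomially large, so all the power-of-$s$ subsets we ask for actually exist inside $Y$. Once every color class is a matching on a $K_{p,p}$ with $p>(4l)^4$, Lemma~\ref{FindRainbowKcc} delivers the rainbow $K_{4l,4l}$, completing the first alternative. The bookkeeping — choosing $\delta$, the restriction sizes, and the final exponent $\epsilon$ — is routine but must be done carefully so that $q=s^\epsilon$ survives all the passes.

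In the complementary case, most vertices $x\in X$ have $d^*(x)<t^{\delta}$, i.e.\ small color-degree. Then by averaging I would pass to a large $X''\subset X$ and, for each $x\in X''$, a color $c_x$ that appears on at least $t^{1-\delta}$ edges of $E(x,Y)$. Another averaging step over which $2l$-subset of $Y$ receives color $c_x$ (there are only $\binom{t}{2l}$ choices, and each $x$ ``hits'' a $t^{1-\delta}$-fraction of $Y$ monochromatically, so by a Kruskal–Katona or simple pigeonhole argument a common $2l$-set $R\subset Y$ is monochromatic under $c_x$ for a large subfamily $Q\subset X''$) produces a weakly $Q$-canonical $K_{q',2l}$ with $q'$ a small power of $s$. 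Finally I apply Lemma~\ref{WeaklyCanonical} to this $K_{q',2l}$ to refine it to a $K_{\sqrt{q'},2l}$ that is either $Q$-canonical or monochromatic, which is exactly the second or third alternative with $q=\sqrt{q'}=s^{\epsilon}$.

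The main obstacle I anticipate is the quantitative control in the high-color-degree case: turning ``many colors'' into ``every color class is a matching on a polynomially large $K_{p,p}$'' requires iterating a restriction that kills large color classes without shrinking the graph below polynomial size, and one must check the exponents close up — each restriction costs a factor in the exponent, and $\epsilon$ has to be chosen small enough (depending only on $l$) that finitely many such steps still leave $q=s^\epsilon\to\infty$. The asymmetry (only $t>s/\log s$, not $t=s$) is what makes this delicate, since we cannot afford to lose more than a logarithmic factor on the $Y$ side; handling that is where the ``new quantitative estimates'' of the title are really needed. The canonical/monochromatic case, by contrast, is essentially two rounds of pigeonhole plus Lemma~\ref{WeaklyCanonical} and should go through cleanly.
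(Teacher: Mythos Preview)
Your low-color-degree branch is essentially the paper's argument and is fine: the paper also pigeon-holes over $2l$-subsets to get a weakly canonical $K_{q',2l}$ and then applies Lemma~\ref{WeaklyCanonical}. (The paper does this slightly differently, first restricting to a small $Y'\subset Y$ of size $t^{1/4l}$ so that there are only $\sqrt t$ many $2l$-subsets to pigeon-hole over, but your version with the full $Y$ and $\delta$ small enough also closes up.)

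The high-color-degree branch, however, has a structural gap, not merely a quantitative one. Large $d^*(x)$ for every $x\in X$ does \emph{not} let you pass to a sub-$K_{p,p}$ whose color classes are matchings, no matter how you randomly restrict. Consider the $Y$-canonical coloring: every $y\in Y$ has its own color $c_y$, and every edge $xy$ gets color $c_y$. Then $d^*(x)=t$ for all $x$, so you are squarely in your ``rainbow'' branch --- yet every color class is a star of size $s$, and this persists under any restriction of $X$ and $Y$. There is no rainbow $K_{2,2}$ anywhere, so Lemma~\ref{FindRainbowKcc} cannot fire. Of course the theorem is still true here (the coloring is $Y$-canonical, giving the second alternative with $Q\subset Y$), but your case split has routed this instance to the wrong branch.

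What is missing is a \emph{second} round of the same dichotomy, applied to the $Y$-side inside your high-color-degree case. This is exactly what the paper does: after ensuring each $x\in X\setminus W$ sees many colors on a small $Y'$ (and hence $E(x,Y')$ can be thinned to rainbow), it picks a small $X'\subset X\setminus W$ and asks whether many $y\in Y'$ see a monochromatic $2l$-set in $X'$. If yes, pigeon-hole plus Lemma~\ref{WeaklyCanonical} gives the canonical/monochromatic $K_{2l,s^\epsilon}$ with $Q\subset Y$; if no, then after deleting repeated colors from each $E(X',y)$ one has a dense bipartite subgraph in which every $E(x,Y')$ \emph{and} every $E(X',y)$ is rainbow, so color classes really are matchings, and K\H{o}v\'ari--S\'os--Tur\'an followed by Lemma~\ref{FindRainbowKcc} finishes. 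The key choice making the exponents close up is taking $|X'|=s^{1/16l^2}$, so that $|X'|^{2l}$ is still only $s^{1/8l}$ and the second pigeon-hole leaves a power of $s$; this is where $\epsilon=1/18l$ comes from.
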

\begin{proof}
We will show that $\epsilon=1/18l$. First, fix a subset $Y'$ of $Y$ with $|Y'|=t^{1/4l}$ and let
$$W=\left\{x\in X: \text{there exists a $Y''\in \binom{Y'}{2l}$ such that $E_G(x,Y'')$ is monochromatic}\right\}.$$

If $|W|>s/2l$, then the number of $Y''\in \binom{Y'}{2l}$ such that $E_G(x,Y'')$ is monochromatic for some $x$ (with repetition) is greater than $s/2l$. On the other hand, $|\binom{Y'}{2l}|<|Y'|^{2l}=\sqrt{t}$. By the pigeonhole principle, there exists a $Y''\in \binom{Y'}{2l}$ such that at least
$$\frac{s}{2l\sqrt{t}}\ge \frac{s}{2l\sqrt{s}}\ge s^{1/3}$$
vertices $x$ have the property that $E_G(x,Y'')$ is monochromatic. Let $Q_1$ be a set of $s^{1/3}$ such $x$. Then we obtain a weakly $Q_1$-canonical $K_{s^{1/3},2l}$ on $Q_1\sqcup Y''$ which, by Lemma~\ref{WeaklyCanonical}, contains a canonical or monochromatic $K_{s^{1/6},2l}$. Since $\epsilon<1/6$, this contains a $K_{s^{\epsilon}, 2l}$ as desired.

We may now assume that $|W|\le s/2l$. By definition of $W$ and the pigeonhole principle,  $E_G(x,Y')$ contains at least $|Y'|/2l$ (distinct) colors for every  $x\in X\setminus W$. Hence, for each $x\in X\setminus W$ we can take $|Y'|/2l$ distinctly colored edges from $E(x,Y')$ to obtain a subgraph $G'$ of $G$ on $(X\setminus W)\sqcup Y'$ with $|X\setminus W||Y'|/2l$ edges.

Pick a subset $X'\subset X\setminus W$ with $|X'|=s^{1/16l^2}$ and $e_{G'}(X',Y')\ge |X'||Y'|/2l$. This is possible by an easy averaging argument. Let 
$$Z=\left\{y\in Y': \text{there exists an $X''\in \binom{X'}{2l}$ such that $E_{G'}(X'',y)$ is monochromatic}\right\}.$$

If $|Z|>|Y'|/20l$, then the number of $X''\in \binom{X'}{2l}$ such that $E_{G'}(X'',y)$ is monochromatic for some $y$ (with repetition) is greater than $|Y'|/20l$. On the other hand, $|\binom{X'}{2l}|<|X'|^{2l}=s^{1/8l}$. By the pigeonhole principle, there exists a $X''\in \binom{X'}{2l}$ such that at least
$$\frac{|Y'|}{20ls^{1/8l}}=\frac{t^{1/4l}}{20ls^{1/8l}}\ge \frac{s^{1/4l}}{(\log s)^{1/4l}20ls^{1/8l}}\ge s^{1/9l}=s^{2\epsilon}$$
vertices $y$ have the property that $E_{G'}(X'',y)$ is monochromatic. Let $Q_2$ be a set of $s^{2\epsilon}$ such $y$. We find a weakly $Q_2$-canonical $K_{2l,s^{2\epsilon}}$ on $X''\sqcup Q_2$. Again, by Lemma~\ref{WeaklyCanonical}, a copy of $K_{2l, s^{\epsilon}}$ that is monochromatic or canonical is obtained.

Finally, we may assume that $|Z|\le |Y'|/20l$. Then
\begin{align*}
e_{G'}(X',Y'\setminus Z)&\ge  e_{G'}(X',Y')-|X'||Z|\ge \frac{1}{2l}|X'||Y'|-\frac{1}{20l}|X'||Y'|=\frac{9}{20l}|X'||Y'|\\
&\ge  \frac{9}{20l}|X'||Y'\setminus Z|.
\end{align*}

Since each vertex $y\in Y'\setminus Z$ has the property that  $E_{G'}(X',y)$ sees each color at most $2l-1$ times, for each $y\in Y'\setminus Z$ we may remove all edges from $E_{G'}(X',y)$ with duplicated colors (keep one for each color). We end up getting a bipartite graph $G''$ on $X'\sqcup (Y'\setminus Z)$ with at least $9|X'||Y'\setminus Z|/40l^2$ edges. By the K\H{o}v\'{a}ri-S\'{o}s-Tur\'{a}n theorem~\cite{KST54}, there is a $c>0$ such that $G''$ contains a copy $K$ of $K_{p,p}$ where $p>c\log s$. 
Let  $V(K)=A\sqcup B$. 
For each $x\in A$, the edges set $E(x,B)$  is rainbow colored, and for each $y\in B$, the edge set $E(A,y)$ is rainbow colored.
Therefore each color class in $K$ is a matching. By Lemma~\ref{FindRainbowKcc} and $s>s_0>2^{(4l)^4/c}$, we can find a rainbow colored $K_{4l,4l}$ in $K$ as desired.
\end{proof}

\subsection{The Induction argument for Lemma~\ref{ColorKst}}
We are now ready to prove Lemma~\ref{ColorKst}.  Let us recall the statement. 

{\bf Lemma~\ref{ColorKst}}
\emph{For each $l\ge 2$, there exists a constant $D=D_l>0$, such that the following holds. Let $s,t\ge 1$, $G=K_{s,t}$ be a complete bipartite graph with vertex set $V(G)\subset [n]$, and $Z\subset [n]$ be a set of colors. Then the number of ways to edge-color $G$ with $Z$ such that the extension $G^*$ contains no $C_{2l}$, is at most $D^{(s+t)^2}$.}
\bigskip

{\bf Proof of Lemma~\ref{ColorKst}.}
Let the vertex set of $G$ be $S\sqcup T$ with $|S|=s$ and $|T|=t$. We apply induction on $s+t$. By Lemma~\ref{NumberColorKst},  $|Z|:=\sigma<2l(s+t)$.
The number of ways to color $G$ is at most
$\sigma^{st}$.
 As long as $s+t\le D/2l$, we have
$$\sigma^{st}\le D^{st}\le D^{(s+t)^2}$$
and this concludes the base case(s).

For the induction step, we may henceforth assume $s+t>D/2l$, and the statement holds for all smaller values of $s+t$.  Let us also assume without loss of generality that $t \le s$. 

Next, we deal with the  case $t\le s/\log s$. Let $D>16l^2$. Then $s>(s+t)/2>D/4l>4l$ and the number of ways to color $G$ is at most
$$\sigma^{st}\le (2l(s+t))^{st}\le 2^{\frac{s^2\log (2l(s+t))}{\log s}}\le 2^{\frac{s^2\log (4ls)}{\log s}}\le 2^{2s^2}\le 2^{(s+t)^2\log D}=D^{(s+t)^2}.$$

Therefore, we may assume that $s/\log s<t\le s$, and $s>D/4l>s_0(l)$ so the conditions of Theorem~\ref{CanonialRamsey} hold. Let $N_G=(z_e)_{e\in G}$ be an edge-coloring of $G$ using colors in $Z$. By Theorem~\ref{CanonialRamsey}, such an edge-colored $G$ will contain a subgraph $G'$ that is either
\begin{itemize}
\item a rainbow  colored $K_{4l,4l}$, or
\item a $Q$-canonical $K_{q,2l}$, or
\item a monochromatic $K_{q,2l}$,
\end{itemize} where $|Q|=q=s^{\epsilon}$.

\begin{claim} \label{rainbow} $G'$ cannot be a rainbow colored $K_{4l, 4l}$.
\end{claim}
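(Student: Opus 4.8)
\textbf{Proof proposal for Claim~\ref{rainbow}.}

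The plan is to show that a rainbow colored $K_{4l,4l}$ inside $G$ would force a $C_{2l}$ in the extension $G^*$, contradicting the hypothesis that $G^*$ is $C_{2l}$-free. Let $G' = K_{4l,4l}$ be rainbow colored, with bipartition $A \sqcup B$, $|A|=|B|=4l$. The key point is that $G'$ contains many copies of $C_{2l}$ as a (2-uniform bipartite) subgraph, and since only finitely many ``bad events'' can destroy the strongly rainbow property, at least one such cycle must be strongly rainbow, hence lifts to a $C_{2l}$ in $G^*$.

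First I would recall that a strongly rainbow colored $C_{2l}$ in $G$ gives rise to a $C_{2l}$ in $G^*$ (this is noted in the paper just after the definition of strongly rainbow). So it suffices to find a $2l$-cycle in $G'$ that is strongly rainbow, i.e., all edge colors distinct \emph{and} none of the colors lies in the vertex set of the cycle. Since $G'$ is already rainbow, the distinctness of colors is automatic for any subgraph; the only obstruction to being strongly rainbow is that some edge of the cycle has a color landing on a vertex of the cycle. The strategy is a counting/averaging argument: exhibit a large family $\mathcal{C}$ of $2l$-cycles in $K_{4l,4l}$ (for instance, all cycles obtained by choosing $l$ vertices from $A$ and $l$ from $B$ and an appropriate cyclic arrangement — there are on the order of $\binom{4l}{l}^2$ of these, a constant depending only on $l$), and then bound how many of them can fail to be strongly rainbow. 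A cycle $C$ fails only if some edge $e \in C$ has $z_e \in V(C)$; for a \emph{fixed} edge $e$ of $K_{4l,4l}$ with color $z_e$, the color $z_e$ can coincide with at most one vertex of $K_{4l,4l}$ (it may not even be a vertex of $G'$ at all), so the number of cycles $C \ni e$ with $z_e \in V(C)$ is at most the number of cycles through $e$ that also pass through that one fixed vertex. One then checks that summing over the $O(l^2)$ edges, the total number of ``bad'' cycles is strictly smaller than $|\mathcal{C}|$, leaving a strongly rainbow $2l$-cycle. Both quantities are explicit constants in $l$, so the inequality holds; I would not grind through the exact binomial bookkeeping here, but it is routine since the ``bad'' count loses a factor roughly $1/(4l)$ per forced vertex.

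Alternatively — and this may be cleaner — I would avoid the counting and instead use a minimality argument in the spirit of Lemma~\ref{C2modimpliesC2l}. Take a shortest strongly rainbow colored even cycle inside $G'$ (one exists: e.g., any $4$-cycle in $K_{4l,4l}$ uses $4$ distinct colors, and since $G'$ has $8l$ vertices but each edge color hits at most one of them, a pigeonhole/greedy choice produces a short strongly rainbow cycle to start from). If this shortest such cycle has length exactly $2l$ we are done; if it is shorter than $2l$, use a chord (available since $K_{4l,4l}$ is complete bipartite and $G'$ has enough vertices) together with the shortness to derive a contradiction, exactly as in the proof of Lemma~\ref{C2modimpliesC2l}; if it is longer than $2l$, a chord splits it into a path of length $2l-1$ plus a shorter even closed walk, and the chord's color can land on a vertex of at most one side, so one of the two resulting cycles is again strongly rainbow and shorter, contradicting minimality unless the length was $2l$.

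The main obstacle I anticipate is making the ``enough room'' bookkeeping precise: I need $K_{4l,4l}$ to be genuinely large enough (in terms of $l$) that after forbidding, for each relevant edge, the single vertex its color might occupy, there is still a full $2l$-cycle available. With $4l$ vertices on each side against a $2l$-cycle using $l$ per side, there is slack, but one must be careful that the forbidden vertices coming from different edges can overlap adversarially; handling this cleanly is why the minimality (shortest-cycle) approach is probably preferable to the raw union bound — it localizes the obstruction to a single chord at each step rather than requiring a global count. Either way the claim follows, and it is purely a statement about the fixed-size graph $K_{4l,4l}$, so no dependence on $s,t,n$ enters.
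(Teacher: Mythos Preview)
Your counting approach has a real gap. You claim the bad count ``loses a factor roughly $1/(4l)$ per forced vertex,'' but that factor is wrong: given that a $2l$-cycle contains a fixed edge $e$, the probability it also contains a fixed third vertex $v$ is $(2l-2)/(8l-2)=(l-1)/(4l-1)$, which tends to $1/4$, not $1/(4l)$. With that correct factor, summing over all $16l^2$ edges gives a bad-cycle bound of $N\cdot \tfrac{2l(l-1)}{4l-1}$, which exceeds the total number $N$ of $2l$-cycles for every $l\ge 3$; so the union bound as you describe it fails. What rescues the count is an observation you mention only parenthetically and never use: since $G'$ is rainbow with $16l^2$ distinct colors but only $8l$ vertices, \emph{at most $8l$ edges can have $z_e\in V(G')$}. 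Summing only over those yields $8l\cdot \tfrac{N}{8l}\cdot\tfrac{l-1}{4l-1}<N/3$, and the argument goes through. That rainbow-forces-few-bad-edges step is the crux, not the per-vertex factor.

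Your alternative minimality route is also broken. If the shortest strongly rainbow cycle has length \emph{less} than $2l$, chords only produce shorter cycles, not longer ones; Lemma~\ref{C2modimpliesC2l} runs in the opposite direction (it shortens a long cycle of length $\equiv 2\pmod{2l-2}$ down to $2l$) and gives you nothing here. And in the ``longer than $2l$'' case you need the length to be $\equiv 2\pmod{2l-2}$ for the chord argument to preserve that residue on the shorter piece --- without establishing that congruence first, the induction collapses.

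The paper's proof is essentially your missing observation, packaged with the already-proved lemmas: since $G'$ is rainbow, at least $16l^2-8l=(2l-1)\cdot 8l$ of its colors lie outside $V(G')$; picking one edge per such color gives a strongly rainbow subgraph $G''\subset G'$ on $8l$ vertices with $(2l-1)\cdot 8l$ edges. Lemma~\ref{CycleLength2modk} (with $h=2l-2$) then yields a cycle of length $\equiv 2\pmod{2l-2}$ inside $G''$, automatically strongly rainbow, and Lemma~\ref{C2modimpliesC2l} cuts it down to a strongly rainbow $C_{2l}$, contradicting $C_{2l}\not\subset G^*$. This avoids any cycle-counting and directly reuses the machinery from the proof of Lemma~\ref{NumberColorKst}.
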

\begin{proof}[Proof of Claim~\ref{rainbow}]
Suppose for a contradiction that $G'=K_{4l, 4l}$ is rainbow colored  and $Z'$ is the set of colors used on $G'$. Then $|Z'\setminus V(G')|\ge 16l^2-8l$. Pick an edge of each color in $Z'\setminus V(G')$ to obtain a strongly rainbow colored subgraph $G''$ of $G'$ with $|G''|=16l^2-8l\ge (2l-1)8l$. By Lemma~\ref{CycleLength2modk}, $G''$ contains a strongly rainbow colored cycle of length $2\mod 2l-2$. Lemma~\ref{C2modimpliesC2l} now implies the existence of a strongly rainbow colored $C_{2l}$ in $G''$, which forms a linear $C_{2l}$ in $G^*$, a contradiction. \end{proof}
\smallskip

Let $q=s^{\epsilon}$ and $\alpha$ be the number of edge-colorings of $G$ that contain a $Q$-canonical subgraph $G'$ which is a copy of $K_{q,2l}$ and let
$\beta$ be the number of edge-colorings of $G$ that contain a monochromatic subgraph $G'$ which is a copy of $K_{q,2l}$.
 We will prove that both $\alpha$ and $\beta$ are at most $(1/2)D^{(s+t)^2}$ and conclude by Claim~\ref{rainbow} that the total number of colorings is at most $\alpha+\beta\le D^{(s+t)^2}$ as desired.

 Let the vertex set of $G'=K_{q,2l}$ be $Q\sqcup R$, where $Q\in \binom{X}{q}$, $R\in\binom{Y}{2l}$ and 
$\{X,Y\}=\{S,T\}$. Define $a=|X|$ and $b=|Y|$ so $\{a,b\}=\{s,t\}$.

 \subsubsection{ The canonical case}
Our goal is to show that $\alpha \le (1/2)D^{(s+t)^2}$. Recall that for each $x\in Q$, the edges in $E(x,R)$ all have the same color $z_x$ which is called a \emph{canonical color}. Let $Z_c=\{z_x: x\in Q\}$ be the set of all canonical colors. For each edge $xy$ with $x\in Q, y\in Y\setminus(R\cup Z_c)$, a color $z_{xy}\neq z_x$ is called a \emph{free color}.
 We will count the number of colorings of $E(Q,Y)$, and then remove $Q$ to apply the induction hypothesis. For each coloring $N_G$, consider the following partition of $Y\setminus (R\cup Z_c)$  into two parts:
\begin{align*}
Y_0&=\{y\in Y\setminus (R\cup Z_c): \text{$E(y,Q)$ sees at most $11l-1$ free colors}\},\\
Y_1&=\{y\in Y\setminus (R\cup Z_c): \text{$E(y,Q)$ sees at least $11l$ free colors}\}.
\end{align*}

We claim that the length of strongly rainbow colored paths that lie between $Q$ and $Y_1$ is bounded.

\begin{claim}\label{PathLengthII}
If there exists a strongly rainbow colored path $P=P_{2l-2}\subset E(Q,Y_1)$ with both end-vertices $u,v\in Q$, then there exists a $C_{2l}$ in $G^*$.
\end{claim}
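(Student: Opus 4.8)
The plan is to view the path $P$ inside $G^{*}$ as a loose path $P^{*}$ of length $2l-2$ and to close it up into a $C_{2l}\subseteq G^{*}$ by bridging its two $Q$-endpoints $u,v$ through a single vertex $r\in R$, exploiting the fact that the copy $G'=K_{q,2l}$ on $Q\sqcup R$ is $Q$-canonical.

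Write $P$ as $u=w_{0},w_{1},\dots,w_{2l-2}=v$, so that $w_{0},w_{2},\dots,w_{2l-2}\in Q$, $w_{1},w_{3},\dots,w_{2l-3}\in Y_{1}$, and let $c_{i}=z_{w_{i-1}w_{i}}$ be the color of the $i$-th edge. Since $P$ is strongly rainbow, the $c_{i}$ are pairwise distinct and lie outside $V(P)$; hence the triples $\{w_{i-1},w_{i},c_{i}\}$, $1\le i\le 2l-2$, form a loose path $P^{*}$ of length $2l-2$ in $G^{*}$, whose two terminal edges are $\{u,w_{1},c_{1}\}$ and $\{v,w_{2l-3},c_{2l-2}\}$. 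Three facts will be used: (i) $V(P)\cap R=\emptyset$, because the $Q$-vertices of $P$ lie in $X$ and the remaining vertices of $P$ lie in $Y_{1}\subseteq Y\setminus(R\cup Z_{c})$ while $R\subseteq Y$; (ii) by $Q$-canonicity of $G'$, for every $r\in R$ the edges $ur$ and $vr$ of $G$ receive the canonical colors $z_{u}$ and $z_{v}$, with $z_{u}\neq z_{v}$ since $u\neq v$ (and trivially $z_{u}\notin\{u\}\cup R$, $z_{v}\notin\{v\}\cup R$); (iii) $|R|=2l>2l-2$.

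The main step is to choose $r\in R\setminus\{c_{1},\dots,c_{2l-2}\}$, possible by (iii), and to check that
\[
\{w_{0},w_{1},c_{1}\},\,\dots,\,\{w_{2l-3},w_{2l-2},c_{2l-2}\},\ \{v,r,z_{v}\},\ \{r,u,z_{u}\}
\]
is a loose $C_{2l}$ in $G^{*}$: consecutive triples should meet in exactly one vertex (the $w_{i}$ along $P^{*}$, then $v$, then $r$, then $u$) and non-consecutive triples should be disjoint. Granting (i), (ii) and the strong-rainbow property, all of these conditions reduce to the single requirement that $z_{u},z_{v}\notin V(P)\cup\{c_{1},\dots,c_{2l-2}\}$ (with $r\notin V(P)$ automatic from (i)).

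I expect establishing $z_{u},z_{v}\notin V(P)\cup\{c_{1},\dots,c_{2l-2}\}$ to be the real obstacle: this set has size at most $4l-3$, but $z_{u},z_{v}$ are not under our control, so a collision such as $z_{u}=w_{j}$ or $z_{u}=c_{j}$ must be ruled out. I would attack this either by using that in the situation where the claim is invoked the path colors $c_{i}$ are free colors drawn from a palette disjoint from the set $Z_{c}$ of canonical colors (so $z_{u},z_{v}\notin\{c_{i}\}$), together with the analogous fact that $z_{u},z_{v}$ may be assumed off $V(P)$; or, failing that, by a short case analysis showing that any such collision itself produces a $C_{2l}$ in $G^{*}$ (or lets us re-select the path, using that $|Q|=q=s^{\epsilon}$ is large so that one can choose the endpoints of a length-$(2l-2)$ strongly rainbow path in $E(Q,Y_{1})$ whose canonical colors avoid the path). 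Once this disjointness is secured, the verification that the $2l$ triples form $C_{2l}$ is routine bookkeeping.
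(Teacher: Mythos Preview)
Your overall plan is exactly the paper's: extend $P$ to a loose path $P^*$ in $G^*$ and close it through a single vertex of $R$ using the canonical colors $z_u,z_v$. You also pinpoint the only real obstacle correctly, namely $z_u,z_v\notin V(P^*)=V(P)\cup\{c_1,\dots,c_{2l-2}\}$. The gap is in how you propose to resolve that obstacle.

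Your first suggestion---that the $c_i$ are automatically free colors (hence disjoint from $Z_c$)---is not justified by the setup. Membership of $y$ in $Y_1$ only says that $E(y,Q)$ \emph{sees} at least $11l$ free colors; it does not force every edge of $E(Q,Y_1)$, and in particular the edges of $P$, to carry free colors. So $c_i\in Z_c$ and hence $z_u\in\{c_i\}$ are live possibilities. Similarly, nothing rules out $z_u$ equalling some vertex $w_{2j}\in Q\subseteq V(P)$; the only cheap exclusion you get for free is $z_u\notin Y_1$, since $Y_1\subseteq Y\setminus Z_c$.

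Your second suggestion (re-select the path because $q$ is large) is pointed in the right direction but misses the mechanism. The paper does not re-choose the whole path, nor does it use the size of $Q$ directly; it performs a \emph{local endpoint replacement} that crucially exploits the definition of $Y_1$. If, say, $z_u\in V(P^*)$, let $y$ be the neighbour of $u$ on $P$. Since $y\in Y_1$, there is a set $S_y\subseteq Q$ with $|S_y|\ge 11l$ such that the edges $\{xy:x\in S_y\}$ are rainbow and all free-colored. Because $|V(P^*)|=4l-3$ and $V(P^*)$ contains at least $l-1$ vertices of $Y_1$ (which are disjoint from $S_y\subseteq X$), one gets $|S_y\setminus V(P^*)|\ge 8l$; among these, at most $|V(P^*)|<4l$ can have their canonical color in $V(P^*)$, and then at most $|V(P^*)|<4l$ of the remaining can have the free color $z_{u'y}$ in $V(P^*)$. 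So some $u'\in S_y\setminus V(P^*)$ satisfies both $z_{u'}\notin V(P^*)$ and $z_{u'y}\notin V(P^*)$; replacing $u$ by $u'$ gives a strongly rainbow $P_{2l-2}$ whose endpoint canonical color now avoids $V(P^*)$. Doing the same at $v$ if necessary, you reach the situation you wanted, after which your closing argument (pick $r\in R$ avoiding the $2l-2$ path colors, and take $P^*\cup\{u r z_u, v r z_v\}$) goes through verbatim. The constant $11l$ in the definition of $Y_1$ is there precisely to make this counting work.
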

\begin{proof}[Proof of Claim~\ref{PathLengthII}]
Clearly, $P$ extends to a linear $P_{2l-2}$ in $G^*$. We may assume both $z_u, z_v\notin V(P^*)$, where $P^*=\{e\cup \{z_e\}:e\in P\}$ is the extension of $P$. Otherwise, suppose w.l.o.g. $z_u\in V(P^*)$, let $y$ be the vertex next to $u$ in $P$, let $S_y$ be of maximum size among sets
$$\{x\in Q: \text{$xy$ all colored by distinct free colors}\}.$$
Since $y\in Y_1$, $|S_y|\ge 11l$. Note that $|V(P^*)|=4l-3$ and $|V(P^*)\cap Y_1|\ge l-1$, we have $|S_y\setminus V(P^*)|\ge 11l-(4l-3-(l-1))\ge 8l$. Since $|V(P^*)|<4l$, $E(y,S_y)$ is rainbow, and $G'$ is $Q$-canonical, there must be at least $4l$ vertices in $S_y\setminus V(P^*)$ whose canonical color is not in $V(P^*)$. Among these $4l$ vertices there is at least one $u'$ with $z_{u'y}\notin V(P^*)$. Replacing $u$ by $u'$, we get a strongly rainbow colored path of length $2l-2$ with $z_u\notin V(P^*)$.

Now, Since $|R|=2l$, we can find a vertex $y\in R$ such that $y\notin \{z_e: e\in P\}$. Further, since both $z_u, z_v\notin V(P^*)$ and $z_u \neq z_v$, the set of edges
$$P^*\cup \{uyz_u,vyz_v\}$$
forms a copy of $C_{2l}$ in $G^*$.
\end{proof}

Thanks to this observation about strongly rainbow paths, we can bound the number of colorings on $E(Q,Y_1)$ as follows. It is convenient to use the following notation.
\smallskip

\begin{definition} Given $X' \subset X$ and $Y' \subset Y$, let $\#E(X',Y')$ be the number of ways to color the edges in $E(X', Y')$. 
\end{definition}
\smallskip

\begin{claim}\label{ColorQY1II}
$\#E(Q,Y_1)\le(2l)^q\cdot (32l^2)^{bq}\cdot (qb)^{2lq}\cdot\sigma^{6lq+8l^2b}.$
\end{claim}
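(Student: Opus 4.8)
The plan is an \emph{encoding} argument: I will show that every admissible colouring of $E(Q,Y_1)$ is reconstructible from a bounded amount of combinatorial data, and then count the possibilities for that data, matching each factor in the statement to one ingredient of the encoding. Here ``admissible'' means that the colouring of $E(Q,Y_1)$, together with the fixed canonical colours $z_x$ ($x\in Q$) inherited from $G'$, creates no strongly rainbow $P_{2l-2}$ with both ends in $Q$; by Claim~\ref{PathLengthII} this is exactly the restriction imposed by $C_{2l}$-freeness of $G^*$.

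The structural heart is the following dichotomy, which I would extract from Claim~\ref{PathLengthII} by a greedy path--extension of the type used in Lemma~\ref{subedge} and Claim~\ref{PathLengthII}. \emph{Claim.} There is a set $W\subseteq Q$ with $|W|\le 8l^2$ and, for each $x\in Q\setminus W$, a set $N_x\subseteq Y_1$ of at most $2l$ ``special'' neighbours, such that the colours occurring on $E(x,Y_1\setminus N_x)$ form a palette $P_x$ with $|P_x|\le 6l$. To prove it, suppose instead that at least $8l^2+1$ vertices $x\in Q$ are ``rich'': no matter which $2l$ incident edges are deleted, $E(x,Y_1)$ still shows more than $6l$ colours. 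Then one builds a strongly rainbow alternating path $x_1y_1x_2\cdots y_{l-1}x_l$ in $E(Q,Y_1)$ keeping every $x_i$ rich and every $y_i$ in $Y_1$: having reached a rich $x_i$, its colour-richness produces a colour, hence an unused neighbour $y_i\in Y_1$, avoiding the fewer than $4l$ vertices and colours used so far; having reached $y_i$, the defining property of $Y_1$ (at least $11l$ free colours on $E(y_i,Q)$, fewer than $4l$ of which are forbidden) produces a rich $x_{i+1}$ whose canonical colour $z_{x_{i+1}}$ is off the current path, which keeps the extension strongly rainbow. After $l-1$ steps this is a strongly rainbow $P_{2l-2}$ between two vertices of $Q$, contradicting Claim~\ref{PathLengthII}. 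The constants $8l^2,6l,11l$ are calibrated to leave room for the at most $4l$ forbidden vertices/colours at each step and for the end--fixup inside Claim~\ref{PathLengthII}, and the at-most-$2l$ special neighbours absorb the occasional edges on which a non-rich $x$ uses a colour outside $P_x$.

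Granting this, the encoding is: (i) name $W$ together with an auxiliary label in $[2l]$ per vertex of $Q$ --- cost at most $(2l)^q(qb)^{2lq}$ (this budget also covers naming, for each $x\in Q\setminus W$, the at most $2l$ special neighbours); (ii) colour the at most $8l^2$ columns $E(x,Y_1)$ with $x\in W$ arbitrarily --- cost at most $\sigma^{|Y_1|}\le\sigma^{b}$ per column, so $\sigma^{8l^2 b}$ in all; (iii) for each $x\in Q\setminus W$, name the palette $P_x$ and the colours of its special edges --- cost at most $\sigma^{6l}$ each, so $\sigma^{6lq}$ in all; (iv) for each edge $xy$ with $x\in Q\setminus W$, $y\in Y_1$, record by a label from a set of size at most $32l^2$ whether $c(xy)$ equals $z_x$, a member of $P_x$, the canonical colour $z_{x'}$ of one of the special endpoints $x'$, or the colour already fixed on one of the special edges --- cost at most $(32l^2)^{bq}$. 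Since $|Y_1|\le b$ and $|Q|=q$, multiplying these gives $\#E(Q,Y_1)\le (2l)^q(32l^2)^{bq}(qb)^{2lq}\sigma^{6lq+8l^2 b}$.

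The step I expect to be the main obstacle is the greedy construction in the structural claim, specifically passing through a vertex $y\in Y_1$: although $y$ sees many free colours on $E(y,Q)$, a priori all of them could lie on edges to \emph{non-rich} vertices, so the process could stall before reaching the next rich vertex. Overcoming this needs more careful bookkeeping --- either running the extension with an induction on $l$ (peeling one layer of the path at a time, which is where the surplus $11l$ must be tracked down to $11(l-1)$), or first passing to a cleaned-up subfamily of $Y_1$ while enlarging $W$ by a controlled amount. Making this work with the stated constants ($|W|\le 8l^2$, $|P_x|\le 6l$, $|N_x|\le 2l$, per-edge label set of size $32l^2$), rather than with larger but still $l$-bounded quantities, is the part requiring the most care.
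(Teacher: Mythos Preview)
Your encoding scheme is coherent, and the final count would indeed match the claimed bound \emph{provided} your structural claim holds. But the approach is genuinely different from the paper's, and the gap you yourself flag---getting from $y\in Y_1$ back to a \emph{rich} vertex of $Q$---is real and, as far as I can see, not repairable with the tools you propose. The $11l$ free colours on $E(y,Q)$ guarantee many good \emph{colours}, but say nothing about the richness of the corresponding \emph{endpoints}: it is consistent with the hypotheses that for some $y$ every edge to a rich $x$ carries a colour already on the path (or even the canonical colour $z_x$), while all the free-coloured edges go into the non-rich part of $Q$. Once the path enters the non-rich part, a palette of size $\le 6l$ can be exhausted before $2l-2$ steps are completed. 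Your suggested fixes (induct on $l$, or prune $Y_1$ and enlarge $W$) would need to control simultaneously the number of rich vertices, the placement of their free-coloured edges, and the distribution of free colours at each $y$; I do not see how to do this while keeping $|W|\le 8l^2$, which is exactly what the factor $\sigma^{8l^2 b}$ demands.

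The paper sidesteps this entirely by encoding a \emph{row} structure instead of your column structure. It partitions $Q=\bigsqcup_{i=1}^{2l-3}Q_i$, where $Q_i$ is the set of $x$ whose longest strongly rainbow path in $E(Q,Y_1)$ starting at $x$ has length exactly $i$ (this is well-defined by Claim~\ref{PathLengthII}). Within each $Q_i$ it processes the vertices one at a time: for the current $x$ it first records a witnessing path $P_x$ of length $i$ (cost $\le (qb\sigma)^{i}$), then observes that for every $y\in Y_1$ with $y\notin V(P_x^*)$ and $z_{xy}\notin V(P_x^*)$, and every $x'\in Q_i$ off the extended path, the colour $z_{x'y}$ \emph{must} lie in $V(P_x^*\cup\{xy\})$---otherwise $P_x\cup\{xy,x'y\}$ would be a strongly rainbow path of length $i+2$ starting at $x'\in Q_i$, contradicting the definition of $Q_i$. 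Thus each such $y$ has its entire row $E(Q_i,y)$ pinned down to at most $(2i+3)^{q_i}\sigma^{2i+1}$ possibilities. One removes $x$ and these $y$'s and repeats; summing $b_{i,x}\le b$ over $x$ and multiplying over $i$ yields the stated bound, with $(2l)^q$ paying for the partition of $Q$, $(qb)^{2lq}\sigma^{6lq}$ for the witness paths and a few expensive entries, and $(32l^2)^{bq}\sigma^{8l^2b}$ for the cheap rows.

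The essential difference: the paper never claims any single $x$ has a small palette. It uses maximality of $P_x$ to constrain, for many $y$ at once, the colours on edges \emph{from $y$ back into the same layer $Q_i$}. That is precisely what avoids the rich-to-rich transition problem that stalls your greedy construction.
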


\begin{proof}[Proof of Claim~\ref{ColorQY1II}]
By Claim~\ref{PathLengthII}, according to the length of the longest strongly rainbow colored path starting at a vertex, $Q$ can be partitioned into  $2l-3$ parts $\bigsqcup_{i=1}^{2l-3}Q_{i}$, where
\begin{align*}
Q_i=\{&x\in Q: \text{the longest strongly rainbow colored path}\\ &\text{starting at $x$ and contained in $E(Q,Y_1)$ has length $i$}\}.
\end{align*}
For each $i$, let $q_i=|Q_i|$.  We now bound the number of colorings of the edges in $E(Q_i,Y_1)$.

Firstly, for each $x\in Q_i$, choose an $i$-path $P_x\subset E(Q,Y_1)$ starting at $x$ and color it strongly rainbow. The number of ways to choose and color these paths for all the vertices $x \in Q_i$  is at most
$$((qb)^{\lceil (i+1)/2\rceil}\sigma^i )^{q_i}\le (qb\sigma)^{iq_i}.$$

Fix an $x\in Q_i$.  Partition $Y_1$  into $3$ parts depending on whether  $y$ is on the extension $P_x^*$ of the path starting at $x$, or the color of  $xy$
is on $P_x^*$ or else,  i.e. $Y_1=\bigsqcup_{j=1}^{3}Y_{i,x}^{(j)}$, where
\begin{align*}
Y_{i,x}^{(1)}&=Y_1\cap V(P_x^*),\\
Y_{i,x}^{(2)}&=\{y\in Y_1\setminus Y_{i,x}^{(1)}: z_{xy}\in V(P_x^*)\},\\
Y_{i,x}^{(3)}&=Y_1\setminus(Y_{i,x}^{(1)}\cup Y_{i,x}^{(2)}).
\end{align*}

Depending on the part of $Y_1$ that a vertex $y$ lies in, we can get different restrictions on the coloring of the edges in $E(y, Q_i)$.

\begin{itemize}

\item If $y\in Y_{i,x}^{(1)}$, then $z_{xy}$ has as many as $\sigma$ choices. Note that $|P_x^*|=2i+1$, and $|Y_{i,x}^{(1)}|\le i+\lceil i/2\rceil\le 2i$. This gives  $\#E(x,Y_{i,x}^{(1)})\le \sigma^{2i}$.

\item If $y\in Y_{i,x}^{(2)}$, then $z_{xy}\in V(P_x^*)$, so there are at most $2i+1$ choices for this color and $\#E(x,Y_{i,x}^{(2)}) \le (2i+1)^b$.

\item Lastly, let $|Y_{i,x}^{(3)}|=b_{i,x}$. If $y\in Y_{i,x}^{(3)}$, then $xy$ extends $P_x$ into a strongly rainbow colored path $P_x'=P_x\cup \{xy\}$ of length $i+1$, which forces the edges $x'y$ to be colored by $V({P_x'}^*)$ for each $x'\in Q_i\setminus V({P_x'}^*)$. Otherwise, the path $P_x'\cup \{x'y\}$ is a strongly rainbow colored path of length $i+2$ starting at a vertex $x'\in Q_i$, contradicting  the definition of $Q_i$. Therefore, $z_{x'y}$ has at most $2i+3$ choices if $x'\in Q_i\setminus V({P_x'}^*)$. Putting this together, for each $y \in Y_{i,x}^{(3)}$, we have 
$$\#E(Q_i\setminus V({P_x'}^*),y)\le (2i+3)^{q_i}.$$
Noticing that $|Q_i\cap V({P_x'}^*)|\le i+1+\lceil(i+1)/2\rceil\le 2i+1$, we have 
$$\#E(Q_i,y) \le \#E(Q_i\cap V({P_x'}^*),y)\cdot\#E(Q_i\setminus V({P_x'}^*),y)\le \sigma^{2i+1}(2i+3)^{q_i}.$$

\end{itemize}
Hence the number of ways to  color $E(x,Y_1)\cup E(Q_i,Y_{i,x}^{(3)})$ is at most
\begin{align*}
2^b\cdot\sigma^{2i}\cdot (2i+1)^b\cdot \sigma^{(2i+1)b_{i,x}}(2i+3)^{q_ib_{i,x}}.
\end{align*}
The term $2^b$ arises above since  $Y_{i,x}^{(1)}$ has already been fixed before this step, so we just need to partition $Y_1\setminus Y_{i,x}^{(1)}$ to get $Y_{i,x}^{(2)}$ and $Y_{i,x}^{(3)}$.

Now we remove $x$ from $Q_i$, $Y_{i,x}^{(3)}$ from $Y_1$ and repeat the above steps until we have the entire $E(Q_i,Y_1)$ colored. Note that $\sum_{x\in Q_i}b_{i,x}\le b$, and that $i\le 2l-3$ which implies $2i+3<4l$. We obtain
\begin{align*}
\#E(Q_i,Y_1)&\le(qb\sigma)^{iq_i}\prod_{x\in Q_i}2^b\cdot\sigma^{2i}\cdot (2i+1)^b\cdot \sigma^{(2i+1)b_{i,x}}(2i+3)^{q_ib_{i,x}}\\
&\le(qb\sigma)^{2lq_i}\prod_{x\in Q_i}2^b\cdot\sigma^{4l+4lb_{i,x}}\cdot(4l)^{b+q_ib_{i,x}}\\
&\le(qb\sigma)^{2lq_i}\cdot2^{bq_i}\cdot\sigma^{4lq_i+4lb}\cdot(4l)^{bq_i+bq_i}\\
&=(32l^2)^{bq_i}\cdot (qb)^{2lq_i}\cdot\sigma^{6lq_i+4lb}.
\end{align*}

Because $\sum_{i=1}^{2l-3}q_i=q$, taking the product over $i\in[2l-3]$, we obtain
\begin{align*}
\#E(Q,Y_1) \le (2l-3)^q \prod_{i=1}^{2l-3} \#E(Q_i, Y_1)
&\le(2l-3)^q\prod_{i=1}^{2l-3}(32l^2)^{bq_i}\cdot (qb)^{2lq_i}\cdot\sigma^{6lq_i+4lb}\\
&\le (2l)^q\cdot (32l^2)^{bq}\cdot (qb)^{2lq}\cdot\sigma^{6lq+8l^2b},
\end{align*}
where $(2l-3)^q$ counts the number of partitions of $Q$ into the $Q_i$.
\end{proof}

Since $G'=E(Q,R)$ is $Q$-canonical, 
$$\#E(Q,R)\le \sigma^q.$$
As $|Z_c|\le q$, 
$$\#E(Q,Y\cap Z_c)\le \sigma^{q^2}.$$
By definition of $Y_0$,  
$$\#E(Q,Y_0)\le (\sigma^{11l}(11l+1)^q)^b\le (\sigma^{11l}(12l)^{q})^b.$$

Therefore to color $E(Q,Y)$, we need to first choose the subsets $R$ and $Z_c\cap Y$ of $Y$ and then take a partition to get $Y_0$ and $Y_1$. We color each of  $E(Q,R), E(Q,Y\cap Z_c),E(Q,Y_0)$ and $E(Q,Y_1)$. This gives 
\begin{align*}
\#E(Q,Y)&\le b^{2l}b^q2^b\cdot \#E(Q,R)\cdot \#E(Q,Y\cap Z_c)\cdot \#E(Q,Y_0)\cdot \#E(Q,Y_1)\\
&\le b^{2l}b^q2^b\cdot \sigma^q \cdot  \sigma^{q^2} \cdot (\sigma^{11l}(12l)^{q})^b\cdot[(2l)^q\cdot (32l^2)^{bq}\cdot (qb)^{2lq}\cdot\sigma^{6lq+8l^2b}]\\
&=b^{2l}2^b\cdot(2lb)^q\cdot(384l^3)^{qb}\cdot(qb)^{2lq}\cdot\sigma^{q+q^2+11lb+6lq+8l^2b}.
\end{align*}

Finally, we apply the induction hypothesis to count the number ways to color $G=K_{s,t}$. Recall that $q=s^\epsilon<s/\log s<t\le s$, $\sigma\le 2l(s+t)\le 4ls$. There are two cases.
\smallskip

{\bf $\bullet$ $(X, Y)=(S,T)$ and $(a,b)=(s,t)$}

 Recall that we must first choose $Q\subset X$.
\begin{align*}
\#E(X,Y)&\le s^q\cdot\#E(Q,Y)\cdot\#E(X\setminus Q,Y)\\
&\le s^q \cdot t^{2l} 2^t\cdot(2lt)^q\cdot(384l^3)^{qt}\cdot(qt)^{2lq}\cdot\sigma^{q+q^2+11lt+6lq+8l^2t}\cdot D^{(s+t-q)^2}\\
&\le s^q t^{2l}\cdot 2^t\cdot(2lt)^q\cdot(384l^3)^{qt}\cdot(qt)^{2lq}\cdot(4ls)^{q+q^2+11lt+6lq+8l^2t}\cdot D^{(s+t-q)^2}\\
&\le  s^q t^{2l}\cdot t^q\cdot(2^{1/q}(2l)^{1/t}384l^3)^{qt}\cdot(qt)^{2lq}\cdot(4ls)^{9l^2t}\cdot D^{-2qt}\cdot D^{-2qs+q^2} \cdot D^{(s+t)^2}\\
&\le \left(\frac{2^{1/q}(2l)^{1/t}384l^3(4l)^{9l^2/q}}{D^2}\right)^{qt}\cdot t^{2l}(st)^q(qt)^{2lq}s^{9l^2t}\cdot D^{-qs}\cdot D^{(s+t)^2}\\
&\le\left(\frac{2^{1/q}(2l)^{1/t}384l^3(4l)^{9l^2/q}}{D^2}\right)^{qt}\cdot \frac{q^{2lq}t^{2l+q+2lq}s^{q+9l^2t}}{D^{qs}}\cdot D^{(s+t)^2}\\
&\le  \frac{1}{4}D^{(s+t)^2}.
\end{align*}
To show the last inequality above, it is obvious  that ${2^{1/q}(2l)^{1/t}384l^3(4l)^{9l^2/q}}<{D^2}$ for large enough $D$, so we are left to show that ${4q^{2lq}t^{2l+q+2lq}s^{q+9l^2t}}<{D^{qs}}$ for large $D$. Taking logarithms, we have
\begin{align*}
\log \left({4q^{2lq}t^{2l+q+2lq}s^{q+9l^2t}}\right)&=2+2lq\log q+(2l+q+2lq)\log t+(q+9l^2t)\log s\\
&\le qs\log D.
\end{align*}
This holds for large  $D$ because $qs$ has the dominating growth rate among all the above terms as $q=s^{\epsilon}$ and $s$ is large.
\smallskip

{\bf $\bullet$  $(X,Y)=(T,S)$ and $(a,b)=(t,s)$}
\begin{align*}
\#E(X,Y)&\le t^q\cdot\#E(Q,X)\cdot\#E(Y\setminus Q,X)\\
&\le t^q \cdot s^{2l}2^s\cdot(2ls)^q\cdot(384l^3)^{qs}\cdot(qs)^{2lq}\cdot\sigma^{q+q^2+11ls+6lq+8l^2s}\cdot D^{(s+t-q)^2}\\
&\le t^q s^{2l}\cdot 2^s\cdot(2ls)^q\cdot(384l^3)^{qs}\cdot(qs)^{2lq}\cdot(4ls)^{q+q^2+11ls+6lq+8l^2s}\cdot D^{(s+t-q)^2}\\
&\le  t^q s^{2l}\cdot s^q\cdot(2^{1/q}(2l)^{1/s}384l^3)^{qs}\cdot(qs)^{2lq}\cdot(4ls)^{9l^2s}\cdot D^{-2qs}\cdot D^{-2qt+q^2} \cdot D^{(s+t)^2}\\
&\le \left(\frac{2^{1/q}(2l)^{1/s}384l^3(4l)^{9l^2/q}}{D^2}\right)^{qs}\cdot s^{2l}(st)^q(qs)^{2lq}s^{9l^2s}\cdot D^{-qt}\cdot D^{(s+t)^2}\\
&\le\left(\frac{2^{1/q}(2l)^{1/s}384l^3(4l)^{9l^2/q}}{D^2}\right)^{qs}\cdot \frac{q^{2lq}t^{q}s^{2l+q+2lq+9l^2t}}{D^{qt}}\cdot D^{(s+t)^2}\\
&\le  \frac{1}{4}D^{(s+t)^2}.
\end{align*}
Again, to show the last inequality above, it is clear that ${2^{1/q}(2l)^{1/s}384l^3(4l)^{9l^2/q}}<{D^2}$ for large $D$, so we are left to show that ${4q^{2lq}t^{q}s^{2l+q+2lq+9l^2s}}<{D^{qs}}$ for large  $D$. Taking logarithms, we have
\begin{align*}
\log \left({4q^{2lq}t^{q}s^{2l+q+2lq+9l^2s}}\right)&=2+2lq\log q+q\log t+(2l+q+2lq+9l^2s)\log s\\
&\le qt\log D.
\end{align*}
This holds for large  $D$ because $qt$ has the dominating growth rate among all the above terms.

In summary, the number of colorings of $G$ such that there exists a $G'\subset G$ that is a $Q$-canonical $K_{q,2l}$ is
$$\alpha\le \frac{1}{4}D^{(s+t)^2}+\frac{1}{4}D^{(s+t)^2}=\frac{1}{2}D^{(s+t)^2}.$$
\subsubsection{The monochromatic case}
Our goal is to show that $\beta \le (1/2)D^{(s+t)^2}$. Recall that the vertex set of $G'=K_{q,2l}$ is $Q\sqcup R$, where $Q\in \binom{X}{q}$ and $R\in\binom{Y}{2l}$. The term \emph{canonical color} now refers to the only color $z_c$ that is used to color all edges of $G'$, and $Z_c=\{z_c\}$ still means the set of canonical colors. A {\it free color} is a color that is not $z_c$. As before we will count the number of colorings of $E(Q,Y)$, and then remove $Q$ to apply the induction hypothesis.

Let $Y_1=Y\setminus (R\cup Z_c)$. Similar to Claim~\ref{PathLengthII}, we claim that the length of a strongly rainbow colored path between $Q$ and $Y_1$ is bounded.

\begin{claim}\label{PathLengthIII}
If there exists a strongly rainbow colored path $P=P_{4l-2}\subset E(Q,Y_1)$ with both end-vertices $u,v\in Q$, then there exists a $C_{2l}$ in $G^*$.
\end{claim}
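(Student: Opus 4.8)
The plan is to reuse the closing-up idea from the proof of Claim~\ref{PathLengthII}, exploiting that in the monochromatic case there is only one canonical color $z_c$. First I would record the structural facts we need: since every edge of the monochromatic $K_{q,2l}$ on $Q\sqcup R$ gets color $z_c$ and a color never lies in its own edge, $z_c\notin Q\cup R$; also $z_c\notin Y_1$ by the definition of $Y_1$, and $R$ is disjoint from both $Q$ and $Y_1$. The crucial observation is that $P$ is \emph{strongly rainbow} colored, so its $4l-2$ edges carry $4l-2$ distinct colors, and therefore at most one edge of $P$ has color $z_c$. Deleting that edge (if it exists) splits $P$ into two sub-paths, the longer of which has at least $\lceil(4l-3)/2\rceil=2l-1$ edges; since it alternates between $Q$ and $Y_1$, a short check shows it contains a sub-path $A_0$ of length exactly $2l-2$ whose two endpoints $u',v'$ both lie in $Q$. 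This step is exactly where the hypothesis $|P|=4l-2$ is used.

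Let $A=A_0^*\subset G^*$ be the extension of $A_0$, a loose path with $2l-2$ edges running from $u'$ to $v'$. By our choice no edge of $A_0$ has color $z_c$, and $V(A_0)\subset Q\cup Y_1$, so $z_c\notin V(A)$; this is the replacement for the vertex-swapping step of Claim~\ref{PathLengthII}, and is precisely the simplification gained from having a single canonical color. Since $A$ uses only $2l-2<2l=|R|$ colors, I can pick two distinct vertices $y,y'\in R$ that are not colors of $A$. As $E(Q,R)$ is monochromatic with color $z_c$, the triples $\{u',y,z_c\}$ and $\{v',y',z_c\}$ lie in $G^*$, and I claim that
$$A\cup\{\{u',y,z_c\},\{v',y',z_c\}\}$$
is a copy of $C_{2l}$ in $G^*$. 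The checks are routine: $\{u',y,z_c\}$ meets the first edge of $A$ in exactly $u'$ and $\{v',y',z_c\}$ meets the last edge of $A$ in exactly $v'$; the two new triples meet each other in exactly $z_c$ (here one uses $u'\neq v'$ and $y\neq y'$, playing the role that the distinctness of the two canonical colors played in Claim~\ref{PathLengthII}); and both new triples are disjoint from every other edge of $A$ because $y,y'\in R$ and $z_c$ all lie outside $V(A)$. The structural facts from the first paragraph dispose of the remaining pairwise disjointness conditions, and the $2l$ edges are clearly distinct.

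I expect the only slightly fiddly point to be the bookkeeping in the first step --- verifying that deleting one edge from a $(4l-2)$-edge path alternating between $Q$ and $Y_1$ still leaves a length-$(2l-2)$ sub-path with both ends in $Q$ --- which is elementary, together with the longer but purely mechanical verification that each pairwise intersection in the proposed $C_{2l}$ has exactly the prescribed size. Neither is a genuine obstacle, and unlike in Claim~\ref{PathLengthII} no vertex-swapping is needed.
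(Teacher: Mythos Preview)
Your argument is correct and follows essentially the same route as the paper's proof: locate a sub-path $P'$ of length $2l-2$ with both ends in $Q$ and $z_c\notin V({P'}^*)$, then close it into a $3$-uniform $C_{2l}$ via two edges $u'yz_c,\,v'y'z_c$ with $y,y'\in R$ avoiding the colors of $P'$. The only cosmetic difference is how the sub-path is found --- you delete the (at most one) $z_c$-colored edge and take the longer piece, while the paper argues by pigeonhole over the $l+1$ sub-paths $q_i\cdots q_{i+l-1}$ that one of them misses $z_c$ in its extension --- but these are equivalent.
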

\begin{proof}[Proof of Claim~\ref{PathLengthIII}]
We observe that $z_c$ appears in the path or the color of the path at most once, as $P$ is strongly rainbow. Hence, by the pigeonhole principle, there exists a sub-path $P'$ of length $2l-2$ such that $z_c\notin V({P'}^*)$ and both end-vertices $u,v$ of $P'$ are in $Q$.

Now, Since $|R|=2l$, we can find two vertices $y, y'\in R$ such that $y, y'\notin \{z_e: e\in P'\}$. Thus, the edges
$${P'}^*\cup \{uyz_c,vy'z_c\}$$
yield a copy of $C_{2l}$ in $G^*$.
\end{proof}

Again, we first use this claim to color $E(Q,Y_1)$.

\begin{claim}\label{ColorQY1III}
$\#E(Q,Y_1) \le (4l)^q\cdot (128l^2)^{qb}\cdot (qb)^{4lq}\cdot\sigma^{12lq+32l^2b}$.
\end{claim}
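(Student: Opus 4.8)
\textbf{Proof proposal for Claim~\ref{ColorQY1III}.}
The plan is to mirror the structure of the proof of Claim~\ref{ColorQY1II}, adjusting all the bounds to reflect the fact that in the monochromatic case the relevant threshold on path length is $4l-2$ rather than $2l-2$ (by Claim~\ref{PathLengthIII} in place of Claim~\ref{PathLengthII}). First I would partition $Q$ into parts $Q_1, \ldots, Q_{4l-3}$ according to the length of the longest strongly rainbow colored path in $E(Q,Y_1)$ starting at each vertex; by Claim~\ref{PathLengthIII} no such path has length $4l-2$, so this partition has at most $4l-3$ parts, contributing a factor of $(4l-3)^q \le (4l)^q$ for the choice of partition. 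Writing $q_i = |Q_i|$ with $\sum_i q_i = q$, I would then bound $\#E(Q_i, Y_1)$ exactly as in Claim~\ref{ColorQY1II}: choose and colour a strongly rainbow $i$-path $P_x$ starting at each $x \in Q_i$ (at most $(qb\sigma)^{iq_i} \le (qb\sigma)^{4lq_i}$ ways since now $i \le 4l-3 < 4l$); then for a fixed $x$, partition $Y_1$ into $Y_{i,x}^{(1)}$ (vertices on $P_x^*$), $Y_{i,x}^{(2)}$ (vertices whose edge to $x$ is coloured inside $V(P_x^*)$), and $Y_{i,x}^{(3)}$ (the rest, whose edges to $x$ extend $P_x$ and thereby constrain the colours of edges from $Q_i$ to that vertex to lie in $V({P_x'}^*)$, by maximality of $i$).

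The per-step bounds become: $|P_x^*| = 2i+1 \le 8l-5$, so $|Y_{i,x}^{(1)}| \le 2i$ and $\#E(x, Y_{i,x}^{(1)}) \le \sigma^{2i}$; $\#E(x, Y_{i,x}^{(2)}) \le (2i+1)^b \le (8l)^b$; and for $y \in Y_{i,x}^{(3)}$, using $|Q_i \cap V({P_x'}^*)| \le 2i+1$ and that $z_{x'y} \in V({P_x'}^*)$ has at most $2i+3 < 8l$ choices for $x' \in Q_i \setminus V({P_x'}^*)$, we get $\#E(Q_i, y) \le \sigma^{2i+1}(2i+3)^{q_i}$. Multiplying the contribution for colouring $E(x,Y_1) \cup E(Q_i, Y_{i,x}^{(3)})$ — including the $2^b$ factor for repartitioning $Y_1 \setminus Y_{i,x}^{(1)}$ — gives $2^b \cdot \sigma^{2i} \cdot (2i+1)^b \cdot \sigma^{(2i+1)b_{i,x}}(2i+3)^{q_i b_{i,x}}$, and then peeling off one $x$ at a time (so that $\sum_{x \in Q_i} b_{i,x} \le b$) and using $2i+3 < 8l$ I would obtain
\begin{align*}
\#E(Q_i, Y_1) &\le (qb\sigma)^{4lq_i}\prod_{x\in Q_i} 2^b \cdot \sigma^{8l + 8l b_{i,x}} \cdot (8l)^{b + q_i b_{i,x}}\\
&\le (qb\sigma)^{4lq_i} \cdot 2^{bq_i} \cdot \sigma^{8lq_i + 8lb} \cdot (8l)^{2bq_i}\\
&= (128l^2)^{bq_i} \cdot (qb)^{4lq_i} \cdot \sigma^{12lq_i + 8lb}.
\end{align*}
Taking the product over $i \in [4l-3]$, using $\sum_i q_i = q$ and that there are at most $4l-3$ parts, the factor $\sigma^{8lb}$ is incurred for each of the (at most $4l$) values of $i$, giving $\sigma^{8lb \cdot 4l} = \sigma^{32l^2 b}$ in total, and altogether
\begin{align*}
\#E(Q, Y_1) &\le (4l-3)^q \prod_{i=1}^{4l-3} (128l^2)^{bq_i} \cdot (qb)^{4lq_i} \cdot \sigma^{12lq_i + 8lb}\\
&\le (4l)^q \cdot (128l^2)^{bq} \cdot (qb)^{4lq} \cdot \sigma^{12lq + 32l^2 b},
\end{align*}
as claimed.

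The main obstacle here is purely bookkeeping rather than conceptual: I must be careful that the path-length threshold $4l-2$ (not $2l-2$) propagates consistently through every exponent — in particular that $i$ ranges up to $4l-3$, that $2i+3 < 8l$, and that summing the "$\sigma^{8lb}$ per part" term over up to $4l$ parts yields the exponent $32l^2 b$ rather than something smaller — and that the base constants ($128l^2$, $8l$, $4l$) are chosen large enough to absorb all the smaller polylogarithmic and constant factors uniformly in $i$. Since Claim~\ref{PathLengthIII} is already established and the combinatorial skeleton is identical to Claim~\ref{ColorQY1II}, no new idea is needed; one simply repeats the argument with the doubled length parameter.
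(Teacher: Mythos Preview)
Your proposal is correct and follows exactly the same approach as the paper: the paper's own proof simply states that the argument proceeds identically to Claim~\ref{ColorQY1II} with $Q$ partitioned into $4l-3$ parts instead of $2l-3$, then records precisely the same chain of inequalities you wrote (using $i\le 4l-3$, hence $2i+3<8l$, to replace the constants $4l$, $32l^2$ by $8l$, $128l^2$ and the exponents $2l$, $8l^2$ by $4l$, $32l^2$). Your bookkeeping matches the paper's line for line.
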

\begin{proof}[Proof of Claim~\ref{ColorQY1III}]
The proof proceeds exactly the same as that of Claim~\ref{ColorQY1II}, except that $Q$ is partitioned into 
$4l-3$ parts $\bigsqcup_{i=1}^{4l-3}Q_{i}$. So in the calculation at the end, we have $i\le 4l-3$ which gives $2i+3<8l$ and
\begin{align*}
\#E(Q_i,Y_1)&\le(qb\sigma)^{iq_i}\prod_{x\in Q_i}2^b\cdot\sigma^{2i}\cdot (2i+1)^b\cdot \sigma^{(2i+1)b_{i,x}}(2i+3)^{q_ib_{i,x}}\\
&\le(qb\sigma)^{4lq_i}\prod_{x\in Q_i}2^b\cdot\sigma^{8l+8lb_{i,x}}\cdot(8l)^{b+q_ib_{i,x}}\\
&\le(qb\sigma)^{4lq_i}\cdot2^{bq_i}\cdot\sigma^{8lq_i+8lb}\cdot(8l)^{bq_i+bq_i}\\
&\le(128l^2)^{bq_i}\cdot (qb)^{4lq_i}\cdot\sigma^{12lq_i+8lb}.
\end{align*}

Again, note that $\sum_{i=1}^{4l-3}q_i=q$. Taking the product over $i\in[4l-3]$, we obtain
\begin{align*}
\#E(Q,Y_1)\le (4l-3)^q\prod_{i=1}^{4l-3}\#E(Q_i, Y_1)&\le(4l-3)^q\prod_{i=1}^{4l-3}(128l^2)^{bq_i}\cdot (qb)^{4lq_i}\cdot\sigma^{12lq_i+8lb}\\
&\le (4l)^q\cdot (128l^2)^{qb}\cdot (qb)^{4lq}\cdot\sigma^{12lq+32l^2b},
\end{align*}
where $(4l-3)^q$ counts the number of partitions of $Q$ into the $Q_i$.
\end{proof}
Similarly, to color $E(Q,Y)$, we need to choose the subsets $R$ and $Y\cap Z_c$, and what remains is $Y_1$. Consequently, 
\begin{align*}
\#E(Q,Y)&\le b^{2l}b\cdot \#E(Q,R)\cdot \#E(Q,Y\cap Z_c)\cdot \#E(Q,Y_1)\\
&\le  b^{2l}b\cdot\sigma\cdot \sigma^{q} \cdot[(4l)^q\cdot (128l^2)^{qb}\cdot (qb)^{4lq}\cdot\sigma^{12lq+32l^2b}]\\
&=b^{2l+1}(4l)^q\cdot(128l^2)^{qb}\cdot(qb)^{4lq}\cdot\sigma^{1+q+12lq+32l^2b}.
\end{align*}

We apply the induction hypothesis to count the number ways to color $G=K_{s,t}$. Recall  that $q=s^\epsilon<s/\log s<t\le s$, $\sigma\le 2l(s+t)\le 4ls$. We split the calculation into two cases.
\smallskip

{\bf $\bullet$ $(X,Y)=(S,T)$ and $(a,b)=(s,t)$} 

Recall that we must first choose $Q\subset X$.
\begin{align*}
\#E(X,Y)&\le s^q\cdot\#E(Q,Y)\cdot\#E(X\setminus Q,Y)\\
&\le s^q \cdot t^{2l+1}(4l)^q\cdot(128l^2)^{qt}\cdot(qt)^{4lq}\cdot\sigma^{1+q+12lq+32l^2t}\cdot D^{(s+t-q)^2}\\
&\le s^q t^{2l+1}\cdot (4l)^q\cdot(128l^2)^{qt}\cdot(qt)^{4lq}\cdot(4ls)^{1+q+12lq+32l^2t}\cdot D^{(s+t-q)^2}\\
&\le  s^q t^{2l+1}\cdot ((4l)^{1/t}128l^2)^{qt}\cdot(qt)^{4lq}\cdot(4ls)^{33l^2t}\cdot D^{-2qt}\cdot D^{-2qs+q^2} \cdot D^{(s+t)^2}\\
&\le \left(\frac{(4l)^{1/t}128l^2(4l)^{33l^2/q}}{D^2}\right)^{qt}\cdot t^{2l+1}s^q(qt)^{4lq}s^{33l^2t}\cdot D^{-qs}\cdot D^{(s+t)^2}\\
&\le\left(\frac{(4l)^{1/t}128l^2(4l)^{33l^2/q}}{D^2}\right)^{qt}\cdot \frac{q^{4lq}t^{2l+1+4lq}s^{q+33l^2t}}{D^{qs}}\cdot D^{(s+t)^2}\\
&\le  \frac{1}{4}D^{(s+t)^2}.
\end{align*}
To show the last inequality above, it is obvious to see that ${(4l)^{1/t}128l^2(4l)^{33l^2/q}}<{D^2}$ for large  $D$, so we are left to show that ${4q^{4lq}t^{2l+1+4lq}s^{q+33l^2t}}<{D^{qs}}$ for large $D$. Taking logarithms, we have
\begin{align*}
\log \left({4q^{4lq}t^{2l+1+4lq}s^{q+33l^2t}}\right)&=2+4lq\log q+(2l+1+4lq)\log t+(q+33l^2t)\log s\\
&\le qs\log D.
\end{align*}
This holds for large  $D$ because $qs$ has dominating growth rate among all the terms above.
\smallskip

{\bf $\bullet$ $(X,Y)=(T,S)$ and $(a,b)=(t,s)$}
\begin{align*}
\#E(X,Y)&\le t^q\cdot\#E(Q,Y)\cdot\#E(X\setminus Q,Y)\\
&\le t^q \cdot s^{2l+1}(4l)^q\cdot(128l^2)^{qs}\cdot(qs)^{4lq}\cdot\sigma^{1+q+12lq+32l^2s}\cdot D^{(s+t-q)^2}\\
&\le t^q s^{2l+1}\cdot (4l)^q\cdot(128l^2)^{qs}\cdot(qs)^{4lq}\cdot(4ls)^{1+q+12lq+32l^2s}\cdot D^{(s+t-q)^2}\\
&\le  t^q s^{2l+1}\cdot ((4l)^{1/s}128l^2)^{qs}\cdot(qs)^{4lq}\cdot(4ls)^{33l^2s}\cdot D^{-2qs}\cdot D^{-2qt+q^2} \cdot D^{(s+t)^2}\\
&\le \left(\frac{(4l)^{1/s}128l^2(4l)^{33l^2/q}}{D^2}\right)^{qs}\cdot s^{2l+1}t^q(qs)^{4lq}s^{33l^2s}\cdot D^{-qt}\cdot D^{(s+t)^2}\\
&\le\left(\frac{(4l)^{1/s}128l^2(4l)^{33l^2/q}}{D^2}\right)^{qs}\cdot \frac{q^{4lq}t^{q}s^{2l+1+4lq+33l^2t}}{D^{qt}}\cdot D^{(s+t)^2}\\
&\le  \frac{1}{4}D^{(s+t)^2}.
\end{align*}
To show the last inequality above, it is obvious to see that ${(4l)^{1/s}128l^2(4l)^{33l^2/q}}<{D^2}$ for large $D$, so we are left to show that ${4q^{4lq}t^{q}s^{2l+1+4lq+33l^2t}}<{D^{qt}}$ for large $D$. Taking logarithms, we have
\begin{align*}
\log \left(4q^{4lq}t^{q}s^{2l+1+4lq+33l^2t}\right)&=2+4lq\log q+q\log t+(2l+1+4lq+33l^2t)\log s\\
&\le qt\log D.
\end{align*}
This holds for large $D$ because $qt$ has the dominating growth rate among all the terms above.

Finally, we have 
$$\beta\le \frac{1}{4}D^{(s+t)^2}+\frac{1}{4}D^{(s+t)^2}=\frac{1}{2}D^{(s+t)^2},$$
and the proof is complete.\qed

\section{Concluding remarks}

$\bullet$ A straightforward corollary of Theorem~\ref{Main} is the very same result for hypergraph paths $P_k$. Indeed, for the upper bound on $\Forb_r(n, P_k)$ one has to just observe that $P_k \subset C_{2\lceil (k+1)/2 \rceil}$, while the lower bound is trivial.

$\bullet$ Although we were unable to use hypergraph containers to prove Theorem~\ref{Main} it would be very interesting to give a new proof using containers. In particular, this would entail proving some supersaturation type results for this problem which may be of independent interest. It would also likely yield some further results in the random setting which we have not addressed.

$\bullet$ The main open problem raised by our work is to solve the analogous question for larger $r$ and for odd cycles (Conjecture~\ref{conj}).

 For $r=3$, our method will not work for odd cycles as it relies on finding a bipartite structure from which it is difficult to extract odd 3-uniform cycles (although this technical hurdle could be overcome to solve the corresponding extremal problem in~\cite{KMV15}).
 
  For larger $r$, our method does not  work because the cost of decomposing a complete $r$-graph into complete $r$-partite subgraphs is too large to remain an error term. More precisely, for $r=3$, we implicitly applied Lemma~\ref{ColorKst}  (in the proof of Lemma~\ref{ColorHyperG}) to reduce the number of ways to color a graph to at most $2^{O(n^2)}$ instead of the trivial $2^{O(n^{2}\log\log n)}$. But for $r>3$ the main term in the calculation turns out to be $2^{O(n^{r-1}(\log n)^{(r-3)/(r-2)})}$ which comes from choosing the  colors for the copies of $K_{s_i:r-1}$ (see Section 3). This cannot be improved due to Theorem~\ref{HypergraphDecomposition} and Lemma~\ref{TuranC2l} each of which gives a bound that is sharp in order of magnitude.
Consequently, even if we proved a version of Lemma~\ref{ColorKst} for $r>3$ (and the tools we have developed should suffice to provide such a proof) this would not improve Theorem~\ref{Main} for $r>3$.

$\bullet$ Another way to generalize the result of Morris-Saxton to hypergraphs is to consider similar enumeration questions when the underlying $r$-graph is linear, meaning that every two edges share at most one vertex. Here the extremal results have recently been proved in~\cite{CGJ} and the formulas are similar to the case of graphs.  The special case of this question for linear triple systems  without a $C_3$ is related to the Ruzsa-Szem\'eredi $(6,3)$ theorem and sets without 3-term arithmetic progressions.
\bigskip

{\bf Acknowledgment.}
We are very grateful to Rob Morris for clarifying some technical parts of the proof in~\cite{MS13} at the early stages of this project.  After those discussions, we realized that the method of hypergraph containers would not apply easily to prove Theorem~\ref{Main} and we therefore developed new ideas. 
We are also grateful to Jozsef Balogh for providing us with some pertinent references, and to Jie Han for pointing out that our proof of Theorem~\ref{Main} applies for $r>3$ and odd $k$.

\end{document}